\journal{Journal of Multivariate Analysis}
\theoremstyle{plain}
\newtheorem{lemma}{Lemma}[section]
\theoremstyle{definition} 
\newtheorem{definition}{Definition}[section]
\newcommand\blfootnote[1]{%
	\begingroup
	\renewcommand\thefootnote{}\footnote{#1}%
	\addtocounter{footnote}{-1}%
	\endgroup
}
\newcommand{\R}{\mathbb{R}}
\newcommand{\eps}{\varepsilon}
\newcommand{\argmin}{\operatornamewithlimits{\arg\min}}
\newcommand{\E}{\op{\mathbb{E}}}
\newcommand{\p}{\mathbb{P}}
\def\z@first#1#2{#1}
\def\z@second#1#2{#2}
\def\z@zp@selectchar#1#2{
	\IfStrEqCase{#2}{%
		{p}{#1{(}{)}}%
		{P}{#1{)}{(}}%
		{c}{#1{[}{]}}%
		{C}{#1{]}{[}}%
		{a}{#1{\{}{\}}}%
		{A}{#1{\}}{\{}}%
		{i}{#1{[}{]}\!#1{[}{]}}%
		{I}{#1{]}{[}\!#1{]}{[}}%
		{t}{#1{<}{>}}%
		{T}{#1{>}{<}}%
		{b}{#1{|}{|}}%
		{n}{#1{\|}{\|}}%
		{v}{#1{.}{.}}%
	}[#1{(}{)}]%
}
\def\z@zp#1#2\fin#3{
	\z@zp@selectchar{\left\z@first}{#1}#3
	\zifempty{#2}%
	{\z@zp@selectchar{\right\z@second}{#1}}%
	{\z@zp@selectchar{\right\z@second}{#2}}%
}
\newcommand{\zp}[2][p]{\zifempty{#1}{\left(#2\right)}{\z@zp#1\fin{#2}}}
\def\zifempty#1#2#3{\def\foo{#1}\ifx\foo\empty\relax#2\else#3\fi}
\newcommand{\lam}{\lambda}
\newcommand{\vfi}{\varphi}
\newcommand{\al}{\alpha}
\newcommand{\bgt}{\begin{itemize}}
	\newcommand{\ent}{\end{itemize}}
\newcommand{\brem}{\begin{rmk}}
	\newcommand{\erem}{\end{rmk}}
\newcommand{\blem}{\begin{lem}}
	\newcommand{\elem}{\end{lem}}
\newcommand{\bcor}{\begin{cor}}
	\newcommand{\ecor}{\end{cor}}
\newcommand{\bTh}{\begin{Th}}
	\newcommand{\eTh}{\end{Th}}
\newcommand{\bpropo}{\begin{propo}}
	\newcommand{\epropo}{\end{propo}}
\newcommand{\op}{\operatorname}
\newcommand{\f}{\frac}
\newcommand{\ff}{\frac{1}}
\newcommand{\bbm}{\begin{bmatrix}}
	\newcommand{\ebm}{\end{bmatrix}}
\newcommand{\bes}{\begin{equation*}}
	\newcommand{\ees}{\end{equation*}}
\newcommand{\be}{\begin{equation}}
	\newcommand{\ee}{\end{equation}}
\newcommand{\beqy}{\begin{eqnarray}}
	\newcommand{\eeqy}{\end{eqnarray}}
\newcommand{\beq}{\begin{eqnarray*}}
	\newcommand{\eeq}{\end{eqnarray*}}
\newcommand{\bpm}{\begin{pmatrix}}
	\newcommand{\epm}{\end{pmatrix}}
\newtheorem{Th}{Theorem}[section]
\newtheorem{propo}[Th]{Proposition}
\newtheorem{Prop}{Proposition}[section] 
\newtheorem{lem}[Th]{Lemma}
\newtheorem{cor}[Th]{Corollary}
\newtheorem{rmk}[Th]{Remark}
\newtheorem{Def}[Th]{Definition}
\theoremstyle{definition}
\long\def\symbolfootnote[#1]#2{\begingroup
	\def\thefootnote{\fnsymbol{footnote}}\footnote[#1]{#2}\endgroup}
\def\@addpunct#1{%
	\relax\ifhmode
	\ifnum\spacefactor>\@m \else#1\fi
	\fi}
\def\blfootnote{\gdef\@thefnmark{}\@footnotetext}
\def\ps@pprintTitle{%
  \let\@oddhead\@empty
  \let\@evenhead\@empty
  \let\@oddfoot\@empty
  \let\@evenfoot\@oddfoot
}
\begin{document}

\begin{frontmatter}
 
\title{Estimation of extreme $L^1$-multivariate expectiles with functional covariates}

\author[1]{Elena Di Bernardino}
\author[1]{Thomas Lalo\"{e}}
\author[1]{Cambyse Pakzad\corref{mycorrespondingauthor}}
\address[1]{Université Côte d’Azur, Laboratoire J.A. Dieudonné, UMR CNRS 7351,   Nice,  France.}
  
\cortext[mycorrespondingauthor]{Corresponding author. Email address: \url{cpakzad@unice.fr}}

\begin{abstract} 
The present article is devoted to the semi-parametric estimation of   multivariate  expectiles  for extreme levels. The considered multivariate  risk measures also include the possible conditioning with respect to a  functional covariate, belonging to an infinite-dimensional space.  By using the first order 
 optimality condition, we interpret these expectiles as solutions of a multidimensional nonlinear optimum problem. Then the inference is based on a minimization algorithm of gradient descent type, coupled with consistent kernel estimations of our key statistical quantities such as conditional quantiles, conditional tail index and conditional tail dependence functions. The method is valid for equivalently heavy-tailed marginals and under a multivariate regular variation condition on the underlying unknown  random vector with  arbitrary dependence structure. Our main result establishes the consistency in probability of the optimum approximated solution vectors with a speed rate. This allows us to estimate the global computational cost of the whole procedure according to the data sample size. 
\end{abstract}

\begin{keyword} 
 Dependence  \sep   Extreme value theory \sep  Multivariate Expectiles \sep  Multivariate Regular Variation \sep   Optimization \sep Multivariate Risk measures 
\MSC[2020] 60G70  
\sep 
62H12  
\sep  
90C53 
\sep 91G70 
\end{keyword}

\end{frontmatter}

\section{Introduction\label{sec:1}}
 
Risk measurement theory is an active branch of research with numerous applications in the fields of finance, insurance, economics and for the environment (hydrology, geology, \ldots). 

To study the extreme risk of a random phenomena, e.g.\ of a big loss on a financial position, the most popular way is to estimate quantiles, also known as Value-at-Risk (VaR), at high level. But it has been argued that it lacks of sub-additivity property \cite{acerbi}, i.e.\ the quantile of the sum of two portfolios can exceed the sum of the quantiles of the two portfolios; which is contradiction with the principle of diversification. Consequently, the quantile is an incoherent risk measure in view of \cite{Artzner}. Another drawback is that it relies on the frequency of tail events and not on their real magnitudes which is precisely what one would like to know. On the other hand, the second most famous risk measure, the expected shortfall, is not elicitable in the sense of \cite{Gneiting2011}, meaning that it is not defined as the minimization of the expectation of some score function. This is though a desirable property since it allows backtesting procedure: one periodically compares the expected risk measure with the actual value of the variable of interest in order to evaluate the accuracy of the forecasting methodology, see \cite{Bignozzi}. The univariate expectiles are then introduced by \cite{NeweyPowell1987} and turn out to be the only law invariant risk measures which are elicitable by construction and coherent for a threshold level range, see \cite{Bellini:coherent,Ziegel}. Concerning the univariate expectile, we also refer to \cite{Bellini, daouia:extreme_expectile,daouia:extreme_expectile2}.  Conversely to quantiles, they depend on both the realisations and probabilities of the underlying random variable. In economic terms, they may be interpreted as ratios of expected gain/loss which found to be recognized in portfolio management, see \cite{Bellini}, and represent the quantity of money to inject in a position to reach a prescribed ratio gain/loss. 

A fundamental question for potential practice is the statistical estimation in the extreme regime of the risk measures at our disposal. In this context, the extreme regime is modelled by a risk level tending towards zero or one, and an assumption of heavy-tails on the underlying distributions, typically of Pareto-type, which best captures rare phenomena. The proper mathematical framework is the regular variation  notion.   Besides, it often happens in practical applications that the observations are recorded along with auxiliary information represented by a random covariate. Then, one would preferably take advantage of the extra information by focusing on the conditional extremes. This line of research was carried by \cite{GSU_non_funct,GSU_AoS} when the covariate is a random vector. A valuable improvement is \cite{GSU2022} for univariate extreme expectiles when the covariate belongs to an infinite-dimensional or functional space.

Furthermore, in several situations, one needs to simultaneously manage risks over different positions, requiring a multivariate version of risk measures which would take the underlying dependence structure into account. Recently, several possible multivariate extensions of expectiles emerged in the literature such as geometric expectiles in \cite{herrmann} and $L^p$-expectiles in \cite{maumedeschamps:extension}. In the present work, we focus on the specific case of the multivariate  $L^1$-expectiles of \cite{maumedeschamps:extension} in the extreme regime, which we abbreviate with the notation MEEs for \emph{Multivariate Extreme Expectiles}. Herein, in \cite{maumedeschamps:extension} the authors construct $\Sigma$-expectiles as another extension possibility based on correlation matrices $\Sigma$ that reduces to the $L^1$-version if $\Sigma\equiv \bm{1}$. To estimate multivariate expectiles, they exploit  in \cite{maumedeschamps:extension} the elicitibality by focusing on the first order optimality condition, namely MEEs are points in the $d$-dimensional Euclidean space for which the score gradient vanishes. The latter only involving tail expectations through positive and negative parts, the authors achieve the estimation by means of Robbins-Monro’s stochastic approximation method for moderate levels of risk. Conversely, in the extreme regime, the same authors use in \cite{MaumeDeschampsRulliereSaidExtremes} classic regular variation tools to express MEEs ratios as solutions of a system of coupled nonlinear equations.   In addition, an estimation procedure is given for $L^1$-MEEs with equivalent regularly varying marginal tails, provided the tail dependence is either comonotonic or asymptotically independent. Then, the  approach  in \cite{MaumeDeschampsRulliereSaidExtremes}   works well for specific dependence structures.\\ 

In this paper, we address the estimation of the $L^1$-MEEs when a conditional  covariate lying in a possibly infinite-dimensional space is available.  We assume the equivalent regularly varying marginal tails hypothesis and that the underlying dependence structure and the marginal distributions are unknown.  Again, the first order optimality condition yields a system of equations for which functional  MEEs ratios are solutions. Equivalently, functional  MEEs may be seen as roots of a certain loss function which can be turned into an optimization problem. Crucial quantities are involved regarding the tail behaviour such as the conditional tail index and the  conditional tail dependence function. Inspired by \cite{BeckMailhotElena2021}, plugging their empirical counterparts in the optimization problem results in the approximated  loss function. Finally,  we propose to apply a BFGS-gradient method. In our main result (see Theorem \ref{th:main2}), we prove the consistency in probability with  rate of the approximated loss function and of the associated optimum solution in this conditional functional setting. 
Contrarily to \cite{BeckMailhotElena2021}, in the present work,  a special attention is devoted to the rates of convergence in the approximation of the underlying optimization problem. It allows to link the loss function approximation quality with the steps of the used  gradient descent algorithm. As a result, we explicitly provide the speed rate at which the approximated optimum converges to its theoretical value, which essentially quantifies the estimation quality according to the sample size.\\


The paper is organized as follows.  
In Section \ref{sec:preliminaries}, we present some necessary notation   and the setting of our model.  
We introduce in Section \ref{Funcional MEEs} the formal definition of functional multivariate $L^1$-expectiles and {how they can be theoretically related to an non-linear optimum problem.}
We subsequently develop in Section \ref{StatTOOLS} the statistical tools of the present  paper. After introducing the different estimators, we construct the associated approximated optimum problem. 
In Section \ref{convergence} we present the required hypotheses  and we state our main result (see Theorem \ref{th:main2}) about the convergence with rate of the approximated optimum problem to the theoretical one. We outline the sketch of the proof of our main result and we give a crucial intermediate convergence result on the loss function and its gradient is also given (see Proposition  \ref{prop:main}). We devote the last part of Section \ref{convergence}  to a discussion about the required hypotheses. 
Section \ref{sectionProof}  is devoted to the proof of the main result.  Auxiliary proofs and supplementary lemmas are postponed to  Section \ref{annexeA}.  Further material about the   second order regular variation condition is provided in   Appendix \ref{sec:RVF}. 
 
\section{Notation and preliminaries} \label{sec:preliminaries}

In this work, we consider a Polish space $(E,\zp[n]{\cdot}_E)$ endowed with its Borel $\sigma$-algebra and a probability space $(\Omega,\mathcal{A},\p)$. Let $2\le d <+\infty$ and a random pair $\left( \bm X,Y\right)\in \R^d\times E $, with $\bm X = (X_j)_{1\le j \le d}$, defined on $\Omega$ such that $\bm X \in (L^1(\Omega))^d$. On account of \cite[Theorem 3.2]{kallenberg2002foundations}, such topological features on $E$ ensures the existence of regular conditional probabilities $\p_y(\cdot) := \p \left(\cdot|Y=y \right)$ on $\mathcal{A}$ for $\p_Y$-almost all $y\in E$, where $\p_Y=\p \circ Y^{-1}$ is the pushforward measure. As a result, we may denote the conditional cumulative distribution function (cdf) of $\bm X$ given $Y=y$ by $F_{\bm X,y}(\bm x) := \p_y(\{ \bm X\le \bm x\})= \p \left( \bm X \le \bm x | Y=y \right)$,  for  $\bm x \in \R^d$ and $y\in E$, where the inequality is to be understood component-wise on $\R^d$.
  
We denote the conditional marginal distributions, which we suppose to be continuous, by $$x_j\mapsto F_{j,y}(x_j):=F_{\bm X,y}(+\infty,\ldots,x_j,+\infty,\ldots),\quad 1\le j\le d,$$
and for $1\le j< k\le d$,  the conditional bivariate marginal distributions by
\begin{equation}\label{Fbivariate}
(x_j,x_k)\mapsto F_{j,k,y}(x_j,x_k):=F_{\bm X,y}(+\infty,\ldots,x_j,+\infty,\ldots,x_k,+\infty,\ldots).
\end{equation}

We also introduce the conditional marginal quantile function of $X_j$ given $Y=y$ and its conditional marginal quantile tail function by,  respectively,
\begin{align}
 q_{j,y}(\al)&:=\inf \left\{x>0,{F}_{j,y}(x)\ge \al \right\},\quad \al \in (0,1), \label{marginalquantile}\\
  U_{j,y}(x) &:= q_{j,y}(1-x^{-1})= \inf \left\{ t\in \R, F_{j,y}(t)\ge 1-x^{-1} \right\} , \quad x>0. \nonumber
\end{align}
Under the hypothesis of continuity of conditional marginal distributions   and by Sklar's Theorem (1959) (see \cite{Sklar}), there exists an unique copula $C_y$ such that, for $\bm u = (u_1,\ldots,u_d)\in [0,1]^d$,
\begin{align*}
	C_y\left( \bm u \right) &= F_{\bm X,y}\left( F^{-1}_{1,y}(u_1),\ldots,F^{-1}_{d,y}(u_d)  \right)
	=\p\Big(\bigcap_{j=1}^d\left\{F_{j,y}(X_{j})\le u_j  \right\}\big| Y=y \Big),
\end{align*}
for which we assume the regularity condition from \cite{irene,Segers2011}; namely, for each $j=1,\ldots,d$,
\begin{align}\label{hyp:segers}
	\bm u\mapsto	\f{\partial C_y}{\partial u_j}(\bm u) \text{ exists and is continuous on the set $\{ \bm u \in [0,1]^d:~0<u_j<1 \}$. }
\end{align}

For $y\in E$ and $ \bm x\in \R_{+}^d$, we define the conditional stable tail dependence function
\begin{align}\label{eq:ell_def_cond}L_y(\bm x)&:= \lim\limits_{t\downarrow 0}t^{-1} \p\Big(\bigcup_{j=1}^d\left\{1-F_{j,y}(X_{j})\le t\,x_j  \right\}\big| Y=y\Big)=  \lim\limits_{t\downarrow 0}t^{-1} \left( 1-C_{y}(\bm 1-t\,\bm x)\right). \end{align}
 
For  any $y\in E$  and $\bm x \in [0,+\infty]^d \setminus \{(+\infty,\ldots,+\infty)\}$, the associated conditional upper tail dependence function is given by  \begin{align}\label{eq:lam_def_cond} \lam_y(\bm x)&= \lim\limits_{t\downarrow 0}t^{-1} \p\Big(\bigcap_{j=1}^d\left\{1-F_{j,y}(X_{j})\le tx_j  \right\}\big| Y=y\Big)=   \zp[n]{\bm x}_1-L_y(\bm x).
\end{align} 

Let $1\le j<k\le d$, $\bm u = (u_1,\ldots,u_d)\in [0,1]^d$ and $\bm u^{j,k} = \left(\bm 1_{\ell\notin \{j,k\}}+ u_\ell \bm 1_{\ell\in \{j,k\}}\right)_{1\le \ell\le d} $.  By using \eqref{eq:lam_def_cond} and for $ \bm x\in \R^2_+$, we introduce the bivariate restrictions of the conditional copula and tail dependence functions as following

\begin{align*}C_{j,k,y}\left( u_j,u_k \right)&:=C_y\left( \bm u^{j,k} \right) = \p\left(F_{j,y}(X_{j})\le u_j,F_{k,y}(X_{k})\le u_k   \big| Y=y \right),\\
L_{j,k,y}(\bm x)&:= \lim\limits_{t\downarrow 0}t^{-1} \left( 1-C_{j,k,y}(\bm 1-t\bm x)\right), \quad \lam_{j,k,y}(\bm x) = \zp[n]{\bm x}_1-L_{j,k,y}(\bm x)= \lim\limits_{t\downarrow 0}t^{-1}{\overline{C}_{j,k,y} (t\bm x)},
\end{align*}
where $\overline{C}_{j,k,y}$ is the survival copula  defined by $\overline{C}_{j,k,y}(u,v)=u+v-1+C_{j,k,y}(1-u,1-v)$ on $[0,1]^2$. In the subsequent paper, we suppose that each bivariate tail dependence function is continuous. Furthermore, we quantify the convergence rate in \eqref{eq:ell_def_cond}, akin to \cite{Einmahl_stdf,Schmidt2006}, by assuming that for any $1\le j\neq k\le d$, there exists $\mu_{j,k,y}>0$ such that, as $t\downarrow 0$, and for any $T>0$, 
\begin{align*}\sup_{\bm x \in [0,T]^d}\zp[b]{t^{-1} \left( 1-C_{j,k,y}\left(\bm 1 - t\bm x \right) \right)-L_{j,k,y}(\bm x)}& = O(t^{\,\mu_{j,k,y}}).
\end{align*}

In particular, defining $\mu_y :=\max\limits_{1\le j\neq k\le d}\{\mu_{j,k,y}\} $, we may write 
\begin{align}\label{hyp:rate_theoretical}\max_{1\le j\neq k\le d}\sup_{\bm x \in [0,T]^d}\zp[b]{t^{-1} \left( 1-C_{j,k,y}\left(\bm 1 - t\bm x \right) \right)-L_{j,k,y}(\bm x)}& = O\left( t^{\,\mu_y}\right) .\end{align}

We now state  hypothesis on the considered random vector $\bm X$ given $Y=y$: it has equivalent regularly varying marginal  tails. More precisely, as in  \cite{BeckMailhotElena2021,MaumeDeschampsRulliereSaidExtremes},  we assume that each conditional marginal function  behaves the same way in the extreme regime.

\begin{definition}[Considered conditional marginal tails model]\label{hyp:model}\textit{
For any $1\le j \le d$ and $y\in E$, there exists $\gamma_y>0$, $\rho_{j,y}\le 0$, $c_{j,y}>0$, such that
	\begin{enumerate}[label=($\mathcal{H}$\arabic*)]
\item \label{hyp:model1} $\overline{F}_{j,y}(\cdot)=\p\left(X_j> \cdot | Y=y\right)\in  2{\rm{RV}}_{-\ff{\gamma_y},\f{\rho_{j,y}}{\gamma_y}}(+\infty)$,
		\item  \label{hyp:model2}$\lim\limits_{x\to \infty}\displaystyle{\f{\overline{F}_{j,y}(x)}{\overline{F}_{1,y}(x)} = c_{j,y}}<+\infty$.
	\end{enumerate}}
\end{definition}

Assumptions \ref{hyp:model1} and  \ref{hyp:model2} are classical in the extreme value literature. For a discussion about Assumptions \ref{hyp:model1} and  \ref{hyp:model2} the reader is referred to   Appendix \ref{sec:RVF}.

\section{Functional MEEs and optimum system}\label{Funcional MEEs}

The multivariate risk measure studied in the present  paper is introduced in the following definition.    
\begin{definition}[Multivariate $L^1$-expectile with  functional covariate]\label{defExpectile}
\textit{Let $y\in E$. We define the  multivariate $L^1$-expectile with  functional covariate $Y$ at risk level $\al \in (0,1)$ as the random vector in $\R^d$, such that
\begin{align}\label{def:cond_multivariate_L1-expectile1}
	\bm{e}_\al( \bm X,y) &:= \argmin_{\bm x\in \R^d} \E\left[\al \zp[n]{(\bm X-\bm x)_+ }^2_1 +(1-\al) \zp[n]{(\bm X-\bm x)_- }^2_1 |Y=y\right]
	\\&=   \argmin_{{(x_1,\ldots,x_d)\,\in \R^d}} \E\left[\al \Big(\sum_{j=1}^d \left(X_j-x_j\right)_+ \Big)^{2 }+(1-\al)\Big(\sum_{j=1}^d \left(X_j-x_j\right)_- \Big)^{2 }|Y=y\right].\nonumber
\end{align}}
 \end{definition}

Definition \ref{defExpectile}  can be seen as a  functional conditional extension  of the multivariate $L^1$-expectiles recently introduced in  \cite{BeckMailhotElena2021} (for further  details the interested reader is referred to Equation (2) and Conclusion section  in \cite{BeckMailhotElena2021}).  Observe that the equality  in \eqref{def:cond_multivariate_L1-expectile1} is assured by the strict convexity of $s_\al(t,x)=\al \zp[n]{(t-x)_+ }_{1}^2 +(1-\al) \zp[n]{(t-x)_- }_{1}^2$ in the variable $x$ which transfers to $\E \left(s_\al(X,x)\right)$ as well by Jensen inequality. Obviously, the conditional $L^1$-expectile in \eqref{def:cond_multivariate_L1-expectile1} is a random vector in $\R^d$ and we write {$ \bm{e}_\al( \bm X,y)=(\bm{e}^j_\al( \bm X,y))_{1\le j\le d}.$}  Furthermore, concerning the endpoint, one has $x_F=\sup\{x,F(x)<1\}=+\infty$, so that $\bm{e}^j_\al( \bm X,y) \rightarrow \infty$, for $\al\to 1$ and  $1\le j \le d.$\\

Definition in \eqref{def:cond_multivariate_L1-expectile1} can be reformulated in terms of first order optimality condition, giving rise to a system in $\R^d$. As suggested in \cite{BeckMailhotElena2021}, we quickly convert the work of \cite{MaumeDeschampsRulliereSaidExtremes} to the functional setting to provide an expression of the optimum system. This involves other quantities such as the conditional tail index in the heavy-tailed case and the conditional bivariate tail dependence function. One may without difficulty transpose Propositions 3.1, 3.3 and 5.1 in \cite{MaumeDeschampsRulliereSaidExtremes} to our functional setup. In particular, suppose that the random vector $\bm X$ given $Y=y$ is regularly varying (MRV)  (see Appendix  \ref{sec:RVF}) of index $1/\gamma_y$ with conditional marginal tails satisfying  Conditions \ref{hyp:model1} and \ref{hyp:model2}. Then, as $\alpha \rightarrow 1$, any limiting vector $(\eta,\beta_2,\ldots,\beta_d)$ of $\left( \frac{1-\al}{\overline{F}_{1,y}\left(\bm{e}_\al^1(\bm X,y)\right)},\frac{\bm{e}_\al^2(\bm X,y)}{\bm{e}_\al^1(\bm X,y)} ,\ldots, \frac{\bm{e}_\al^d(\bm X,y)}{\bm{e}_\al^1(\bm X,y)}\right),$ for $y\in E,$ satisfies the system with $k=1,\ldots,d$,
	\begin{align}\label{eq:system_CEME}
		\ff{1/\gamma_y-1}-\eta \frac{\beta_k^{1/\gamma_y}}{c_{k,y}}&=-\sum_{j\neq k}\left(  \int_{\frac{\beta_j}{\beta_k}}^{+\infty}\lam_{j,k,y}\left(\frac{c_{j,y}}{c_{k,y}}t^{-1/\gamma_y},1\right) \mathrm{d}t- \eta \frac{\beta_k^{1/\gamma_y-1}}{c_{k,y}}\beta_j\right).
	\end{align}
 
 In the case where $\bm \Theta :=(\eta,\beta_2,\ldots,\beta_d)$ is unique,   by using the quantile in \eqref{marginalquantile},  as $\al \to 1$,  we have  $$\bm{e}_\al( \bm X,y)\sim q_{1,y}(\al)\eta^{\gamma_y}\left( 1,\beta_2,\ldots,\beta_d\right)^t. $$

 For a comprehensive review of the different definitions available for the MRV property, we refer to \cite{resnick2007heavy}. This assumption on $\bm X| Y $ mainly ensures the existence of {the conditional} bivariate upper tail dependence functions which we suppose continuous, see Section 2.3 in \cite{MaumeDeschampsRulliereSaidExtremes}. Notice the system \eqref{eq:system_CEME} only displays pairwise interactions through the conditional bivariate tail dependence function. In view of \eqref{eq:system_CEME}, we introduce the associated optimum problem.

\begin{definition}[Loss function for the  optimum problem of   $\bm{e}_\al(\bm X,y)$] \label{defLoss}
\textit{Let   $y\in E$. We consider $\bm \Theta := (\eta,\beta_2,\ldots,\beta_d)>0$ as \eqref{eq:system_CEME}.  Consider also
\begin{align}\label{xivector}\bm \xi_y &= (\gamma_y,c_{2,y},\ldots,c_{d,y},\lam_y)\end{align}
 with $\lam_y$  as in \eqref{eq:lam_def_cond}, $\gamma_y$ as in Condition \ref{hyp:model1} and   $c_{j,y}$ as in Condition  \ref{hyp:model2}.   Define the vector $\bm{\vfi}_y \left( \bm \Theta ,\bm \xi_y \right)  = \left( \vfi_{k,y} ( \bm \Theta ,\bm \xi_y ) \right) _{1\le k\le d}$ 
 where
\begin{align}\label{eq:vfi}
	{\vfi}_{k,y} ( \bm \Theta ,\bm \xi_y ) &:=
	\frac{\gamma_y}{\gamma_y-1}-\eta \frac{\beta^{1/{\gamma_y}}_k}{c_{k,y}}\left(1+\sum_{j\neq k} \frac{\beta_j}{\beta_k} \right) +\sum_{j\neq k} \int_{\frac{\beta_j}{\beta_k}}^{+\infty}\lam_{ j,k,y}\left(\frac{c_{j,y}}{c_{k,y}}t^{-1/{\gamma_y}},1\right) \mathrm{d}t.
\end{align}
With these notations, one can formulate the system in \eqref{eq:system_CEME} as $\bm{\vfi}_y \left( \bm \Theta ,\bm \xi_y \right) = \bm 0$ (the equality being understood in $\R^d$).  We introduce the loss function via the squared Euclidean norm
 \begin{align}\label{eq:loss}
\mathcal{L}_{\bm \xi_y} \left( \bm \Theta  \right) &:= \ff{2} \zp[n]{\bm{\vfi}_y \left( \bm \Theta ,\bm \xi_y \right)}^2_2=\ff{2}\sum_{k=1}^d \vfi_{k,y}\left( \bm \Theta ,\bm \xi_y \right)^2,
\end{align} and an optimal vector $\bm \Theta^\star_y$ obtained by minimizing the loss function $\mathcal{L}_{\bm \xi_y}$ in \eqref{eq:loss}, \begin{align}\label{eq:optimization}\bm \Theta^\star_y\in &\argmin_{\bm \Theta} \mathcal{L}_{\bm \xi_y} ( \bm \Theta ).
\end{align} }
\end{definition}

To minimize the optimum problem \eqref{eq:optimization}, we turn to the Broyden–Fletcher–Goldfarb–Shanno (BFGS) algorithm which belongs to the the family of quasi-Newton optimization methods. This choice is motivated by practicality while, from a rigorous point of view, its global convergence is denied due to the loss function lacking of convexity property. A particular interest comes from the fact that no second derivatives computations are needed. In fact, we will rather consider the L-BFGS-B method, a refinement of the classic BFGS that incorporates bound constraints.

Since the underlying distribution is not known in practice, direct application of any minimization algorithm on the optimization problem \eqref{eq:loss}-\eqref{eq:optimization}  is not feasible.  The next section is devoted to overcome this drawback.

\section{Approximated optimum problem}\label{StatTOOLS}

Let $\left( \bm{X}_i, Y_i\right)$ for ${1\le i \le n}$,   with $\bm X_i=(X_{ij})_{ 1\le j \le d}$,  be  $n$ independent copies of $\left( \bm X,Y\right)\in \R^d\times E $. In the following the convergence in probability and in distribution, as $n$ goes to infinity, are respectively denoted by $\xrightarrow{\p}$\, and \, $\xrightarrow{d}$.  Relying on this data sample, we consider  an approximated version of the loss function for the  optimum problem of   $\bm{e}_\al(\bm X,y)$ previously introduced  in Definition \ref{defLoss}.

\begin{definition}[Approximated loss function of $\bm{e}_\al(\bm X,y)$] \label{defLossApprox}
\textit{By using \eqref{eq:loss} the  approximated loss function can be written as
$\mathcal{L}_{\bm{\hat \xi}_{n,y}} \left( \bm \Theta  \right) = \ff{2} \zp[n]{\bm{\vfi}_y \left( \bm \Theta , \bm{\hat \xi}_{n,y} \right)}^2_2$, where   $\bm{\hat \xi}_{n,y}$ is  an estimator vector of $\bm \xi_y$ in \eqref{xivector}.  Furthermore, we define the  associated    optimum problem  as
\begin{align}\label{eq:optimization_approx}
\hat{ \bm \Theta}^\star_{y}\in &\argmin_{\bm \Theta} \mathcal{L}_{\hat{ \bm \xi}_{n,y}} (\bm \Theta).
\end{align}}
\end{definition} 
We now  aim to consistently  estimate the  loss function { $\mathcal{L}_{\bm{  \xi}_{y}} \left( \bm \Theta  \right)$.}   To this purpose, we firstly  build a consistent estimator vector  $\bm{\hat \xi}_{n,y} = (\hat \gamma_{n,y},\hat c_{2,n,y},\ldots,\hat c_{d,n,y},\hat \lam_y)$ of $\bm \xi_y$.   According to \eqref{eq:system_CEME}, we  need to  estimate  $\lam_{j,k,y}$ (see Section \ref{subsection1Estimators} below) and  $\gamma_y$, $\bm{c}_y=\left(c_{2,y},\ldots,c_{d,y}\right)$ (see Section \ref{subsection2Estimators}).    The interested reader is referred to the framework in \cite{irene, GSU2022} (see also~\cite{GG2012}).

\subsection{Empirical counterpart for $\lam_{j,k,y}$}\label{subsection1Estimators}

 Let $1\le j<k\le d$.   We consider the empirical counterpart of the bivariate conditional cdf $F_{j,k,y}$ in  \eqref{Fbivariate} (see, \emph{e.g.}, \cite{Daouia2013,Ferraty2010,GSU2022})
\begin{align}\label{eq:cdf_estimator}
	\hat{F}_{j,k,n,y}\left( x_j,x_k\right) &:=\sum_{i=1}^n w_{i,n}\bm{1}_{\{ X_{ij}\le x_j,X_{ik}\le x_k \}},\, \, \mbox{ for } \,\, x_j,x_k\in \R,
\end{align}
where the Nadaraya-Watson weights are given by
\begin{align}\label{eq:weight} w_{i,n}&:=  w_{i,n}(Y,y,h_n)= \frac{K\left( \f{{\zp[n]{Y_i-y}_E} }{h_n}\right) }{\sum_{s=1}^nK\left( \f{{\zp[n]{Y_s-y}_E} }{h_n}\right)}, \, \, \mbox{ with } \,\, \sum_{i=1}^n w_{i,n} = 1,
\end{align}with  $K:\R\to \R^+$ a kernel function, \emph{i.e.} positive and measurable, having support in $[0,1]$.


We define the empirical estimator of the conditional copula and its bivariate restriction
\begin{align}\label{eq:copula_estimator}
\hat C_{n,y}(\bm u) &:= \sum_{i=1}^n w_{i,n}\bm{1}_{\{ \bigcap_{j=1}^d\{ \hat{F}_{j,n,y}( X_{ij})\le u_j \}\}},\quad
\hat{C}^{\, }_{j,k,n,y}(u_j,u_k):=\hat{C}_{n,y}(\bm u^{j,k}) =\sum_{i=1}^n w_{i,n}\bm{1}_{\{ \hat{F}_{j,n,y}( X_{ij})\le u_j, \hat{F}_{k,n,y}(X_{ik})\le u_k \}},
\end{align}
where $\hat{F}_{j,n,y}$, $\hat{F}_{k,n,y}$ are the corresponding marginal distribution functions of $\hat{F}_{j,k,n,y}$ in \eqref{eq:cdf_estimator}.
Furthermore, for $ \bm x\in \R^2_+$ and $k_n$ a deterministic intermediate sequence, i.e. $1\ll k_n \ll n$, we  consider the  empirical estimators of the bivariate conditional stable tail dependence function and of the associated upper  tail dependence function:
 \begin{align*}
 \hat L_{j,k,n,y}(\bm x) &:= \frac{n}{k_n}\Big( 1 - \hat C_{j,k,n,y}(\bm 1- \frac{k_n}{n}\bm x)  \Big),\quad
 \hat \lam_{j,k,n,y}(\bm x) = \zp[n]{\bm x}_1-\hat L_{j,k,n,y}(\bm x).
 \end{align*}

\subsection{Empirical counterparts for $\gamma_y$ and $\textbf{{c}}_y$}\label{subsection2Estimators}


Let $y\in E$. Assume  $h_n\ll 1$ be  a non-random positive sequence such that  the small ball probability function is positive, \emph{i.e.},  \begin{align}\label{smallBall}
\psi_y(h_n)&:= \p\left( {\zp[n]{Y-y}_E} \le h_n\right) >0.
\end{align}
 The latter quantity has a major impact in the functional estimation framework as one can see for instance in our   hypotheses in Section  \ref{AssumptionsMainTheorem} below. Note that it heavily relies on the intrinsic metric on $E$. \newline

Our plug-in estimation relies on functional estimators of the marginal quantile and tail index previously introduced by  \cite{GG2012,GSU2022}. Let us then define
\begin{align*}
\hat{q}_{j,y,n}(\al)&:=\inf \left\{x>0,\hat{{F}}_{j,y,n}(x)\ge  \al \right\},\quad \al \in [0,1],
\end{align*}
the natural empirical counterpart of the conditional quantile function in \eqref{marginalquantile}, where $\hat{{F}}_{j,y,n}$, is  the  marginal distribution function associated to \eqref{eq:cdf_estimator}.

We pick an integer $1 \le J<+\infty$ and a subdivision  $0<\tau_J<\tau_{J-1}<\cdots< \tau_i< \cdots \tau_2<\tau_1\le 1$, \emph{e.g.} $\tau_i=1/i$ and $J=9$ are motivated by the discussion after  Corollary 2 in \cite{Daouia2011}.  The tail index being the same over all margins, we restrict our attention to $j=1$ and consider the so-called functional Hill estimator, first introduced by \cite{GG2012},
\begin{align}\label{eq:Hill2}\hat{\gamma}_{n,y}&:= \frac{\sum_{i=1}^{J}\ln {\hat{q}_{1,n,y}\left(1-\tau_i \overline{\al}_n\right)}-\ln {\hat{q}_{1,n,y}\left(\al_n\right)}}{-\sum_{i=1}^J \ln(\tau_i)}.
\end{align}
Concerning the tail ratio $c_{j,y}$ in Condition \ref{hyp:model2}, we simply extend in a functional framework the estimator proposed in \cite{BeckMailhotElena2021,MaumeDeschampsRulliereSaidExtremes}
\begin{align}\label{eq:hatc_j}
\hat{c}_{j,n,y}&:=\left( \frac{\hat{q}_{ j,n,y}\left(\al_n\right)}{\hat{q}_{ 1,n,y}\left(\al_n\right)}\right)^{1/\hat{\gamma}_{n,y}} \,\, \mbox{ for } \, \, j=2,\ldots,d.
\end{align}

By using the inference procedure presented in Sections \ref{subsection1Estimators} and \ref{subsection2Estimators}, we now focus on the convergence of the approximated optimum problem  in Equation \eqref{eq:optimization_approx}.

\section{Convergence of the approximated optimum problem for functional MEEs}\label{convergence}

In Section \ref{AssumptionsMainTheorem} below, we introduce conditions required in our main convergence result, \emph{i.e.}, Theorem \ref{prop:main}. These conditions fall into porous categories. Because of the bias, the scaling sequence $\alpha_n$ and the smoothing parameter $h_n$ are linked through different regimes involving in particular the small ball probability $\psi_y(h_n)$.  

\subsection{Required assumptions} \label{AssumptionsMainTheorem}
  
We organize the presentation of  our assumptions by separating them  in  three classes (I, II and III). The first class below  is related to the bandwidth $h_n$ for the small ball probability in \eqref{smallBall}, the risk level $\alpha_n$  and the marginal distribution function of $X_1$.

\begin{enumerate}[label=(I.\arabic*)]\label{classeI}
	\item \label{hyp:regime}  The bandwidth $h:=h_n\ll 1$ satisfies the intermediate regime $n\overline{\al}_n\psi_y(h_n) \gg 1$.
\item \label{hyp:oscillation} For any $1\le j\le d$, there exists $\delta_j>0$ such that for any $y\in E$,  
\begin{align*}
\sqrt{ n\overline{\al}_n\psi_y(h_n)} \ln(\overline{\al}_n)  \sup_{{z\ge (1-\delta_j)q_{j,y}(\al_n)}\atop{y'\in B(y,h_n)}}
\ff{\ln z} \zp[b]{\ln \f{\overline{F}_{1,y'}(z)}{\overline{F}_{1,y}(z)}}&\xrightarrow[n\to +\infty]{} 0.
\end{align*}	
 \item \label{hyp:regime2}  For any $y\in E$, it holds that  $	\sqrt{	n\overline{\al}_n\psi_y(h_n) } A_{1,y}(\overline{\al}^{-1}_n)\xrightarrow[n\to +\infty]{}p_1\in \R$, where $A_{1,y}$ is the auxiliary function of $X_1$ from Condition  \ref{hyp:model1} (see also Equation  \eqref{eq:soc1}).
	\item \label{hyp:tau} For any $s\in [0,1]$ and $0<h_n\ll 1$ deterministic, the following pointwise convergence holds
\begin{align*}
\tau_{y,h_n}(s):=\f{\psi_y(h_ns)}{\psi_y(h_n)} = \p\left(  \zp[n]{Y-y}_E <sh_n \big |  \zp[n]{Y-y}_E<h_n \right)&\xrightarrow[h_n \to 0_+]{}\tau_{y,0}(s).
		\end{align*}
\end{enumerate}\smallskip 

A second class of hypothesis involves the {conditional}  bivariate distribution  functions and the {conditional}  bivariate upper tail dependence functions.

\begin{enumerate}[label=(II.\arabic*)]\label{classeII}
\item \label{hyp:integrability} For any $1\le j<k\le d$ and $y\in E$, there exists $\eta_{j,k,y}>0 $ such that, for $n$ large enough, as $t\to 0$,\begin{align*}
			\lam_{ j,k,y}\left(t,1\right) = O\left( t^{\,\eta_{j,k,y}} \right),
					\quad \hat \lam_{j,k,n,y}\left(t,1\right) = O_{\p}\left( t^{\,\eta_{j,k,y}} \right).\end{align*}
				Furthermore, it holds that $\gamma_y< \eta_y:= \min_{j\neq k} \eta_{j,k,y}$.
\item \label{hyp:continuity} For any $1\le j<k\le d$ and $y\in E$,  it holds that $\sup\limits_{y'\in E, \,\zp[n]{y'}\le 1} \sup\limits_{\bm x\in \R^2} \zp[b]{F_{j,k,y+\delta y'}(\bm x)-F_{j,k,y}(\bm x)}\xrightarrow[\delta \to 0_+]{}0.$ 
	\item \label{hyp:beta} For any $1\le j <k\le d$, analogously as in \cite{Ferraty2007} we consider  
\begin{align*} 
	\phi^F_{j,k,\bm x,y}(s)&:=\E_Y\left[F_{j,k,Y}(\bm x)-F_{j,k,y}(\bm x) \,\big|\, \zp[n]{Y-y}_E = s \right], \,\, \mbox{ for } \,\, \bm x = (x_j,x_k),
\end{align*} quantifying  the expected difference $F_{j,k,Y}(\bm x)-F_{j,k,y}(\bm x)$
when $Y$ is forced to be at a distance $s$ from the point $y \in E$.
Assume that there exists $\beta>0$ such that the function $s \mapsto \phi^F_{j,k,\bm x,y}(s)$ with $\bm x = (x_j,x_k)$ satisfies, uniformly in $\bm x$, \begin{align*}
			\phi^F_{j,k,\bm x,y}(s) = s^{\,\beta}\nu_{j,k}(\bm x)+o(s^{\,\beta}),\quad s\to 0_+,
		\end{align*} for some continuous function $\bm u \mapsto\nu_{j,k}( F^{-1}_{j,y}(u_j),F^{-1}_{k,y}(u_k) ) $ on $[0,1]^2$. Furthermore, the bandwidth $h:=h_n\ll 1$ satisfies $ n\psi_y(h_n)h_n^{\,2\beta} = O(1)$.
	\end{enumerate}\smallskip 

 A third class hypothesis focuses on the Kernel $K$ in \eqref{eq:weight}.
\begin{enumerate}[label=(III.\arabic*)]\label{classeIII}
\item \label{hyp:kernel} $K$ has support $[0,1]$, is bounded in $(0,1)$, has a continuous derivative on $[0,1)$ with $K'(s)\le 0$ and \begin{align*}
		M_1 := K(1) - \int_0^1 K'(s)\tau_{y,0}(s)\mathrm{d}s >0,
\end{align*}	 where $\tau_{y,0}$ is the pointwise limit as $h \to 0_+$ of $\tau_{y,h}$ in \eqref{hyp:tau}. 
		\end{enumerate}

A discussion on the previous required assumptions is postponed to Section \ref{Hypdiscussion}. We are now ready to state and prove our main result.
 
\subsection{Main result}

We begin by enunciating the multiple hypotheses needed for our main result. It is then followed by the different speed rates associated to the estimation part of the optimum problem and to the actual minimization algorithm (BFGS family). These scaling sequences are expressed in terms of copula estimation, small ball probability and regular variation.

Let a covariate value $y\in E$. Regarding {the  inference}, we require that:
\begin{itemize}
    \item the random vector $\bm X=(X_1,\ldots,X_d)$ given $Y=y$ is multivariate regularly varying with index $1/\gamma_y$ {(see Appendix 
 \ref{sec:RVF})} and the conditional marginal tails satisfy  Conditions \ref{hyp:model1} and \ref{hyp:model2}. 
\item The  condition in \eqref{hyp:segers} and the first order copula condition in \eqref{hyp:rate_theoretical}. 
\item Assumptions of classes I, II and III in Section \ref{AssumptionsMainTheorem}.
\end{itemize}

Next, denote by $\hat{\bm{\Theta}}^{(m)}_y$  the $m^{\rm th}$ iteration step of the minimization algorithm applied to the approximated optimum problem \eqref{eq:optimization_approx}. We make the following assumptions on the optimization procedure: 
\begin{itemize}
    \item for any $y\in E$, there exists a hyperrectangle $\mathcal{K}:= \prod_{r=1}^d [\eps_r,M_r]\subset (0,+\infty)^d$  such that for any $m\ge 1$ and $n\ge 1$ large enough, $\hat{\bm{\Theta}}^{(m)}_y\in \mathcal{K}$.
    \item The minimization algorithm solves for the global minimum of optimum problem \eqref{eq:optimization} and has complexity  $O(\delta_{(m)})$, i.e., the computational cost of $m$ steps of the algorithm is proportional to $\delta_{(m)}$.
\end{itemize}

Concerning the speed rates, for any $1\le j\le d$, any deterministic $\al_n = 1+o(1)$, $1\ll k_n\ll n$ and $m_n\gg 1$, we define the following deterministic sequences, with $\mu_y>0$ as in \eqref{hyp:rate_theoretical},
\begin{align}\label{eq:scaling1}
	 \tilde{\delta}_{-1,n,y}&:= \left(  \f{n}{k_n}\right)^{\,\mu_y},\quad\tilde \delta_{0,n,y}:=\sqrt{n\overline{\al}_n\psi_y(h)} ,\quad \tilde \delta_{j,n,y} := A^{-1}_{j,y}\left( \overline{\al}_n^{-1}\right) .
\end{align}
Next, we introduce the speed rate $\delta_{n,y}$ as a scaling sequence such that
\begin{align}\label{eq:scaling2.5}
	 1\ll \delta_{n,y}\ll  \min_{-1\le j \le d}\tilde \delta_{j,n,y}.
\end{align}



We may now state our main result under the previous assumptions.
\begin{Th}[Approximated optimum convergence  with rate]\label{th:main2}  
\textit{Let $y \in E$.  Let $\bm\Theta^\star_y$ the optimum of \eqref{eq:optimization} and $\hat{\bm{\Theta}}^{(m_n)}_y$ the $m_n^{\rm th}$ iteration step in the minimization algorithm applied to \eqref{eq:optimization_approx} where $m_n\gg 1$ is deterministic. Let also two scaling sequences $\delta_{n,y}$ and $\delta_{(m_n)}$. Then, under the previous hypotheses,\begin{align*} 
			\min \left(\delta_{n,y}, \, \delta_{(m_n)}	\right)\zp[n]{\hat{\bm{\Theta}}^{(m_n)}_y- {\bm \Theta}^{\star}_{y}}_1&\xrightarrow[n\to +\infty]{\p}0. 
\end{align*}}
\end{Th}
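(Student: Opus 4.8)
The plan is to decompose the error $\hat{\bm\Theta}^{(m_n)}_y-\bm\Theta^\star_y$ into two conceptually distinct pieces and control each by its own scaling sequence. Writing
\begin{align*}
\hat{\bm\Theta}^{(m_n)}_y-\bm\Theta^\star_y = \left(\hat{\bm\Theta}^{(m_n)}_y-\hat{\bm\Theta}^\star_y\right)+\left(\hat{\bm\Theta}^\star_y-\bm\Theta^\star_y\right),
\end{align*}
the first term is the \emph{optimization error} --- the gap between the $m_n$-th iterate of the BFGS scheme and the true minimizer $\hat{\bm\Theta}^\star_y$ of the approximated loss $\mathcal{L}_{\hat{\bm\xi}_{n,y}}$ --- while the second is the \emph{statistical error} coming from plugging the estimator $\hat{\bm\xi}_{n,y}$ in place of the true parameter vector $\bm\xi_y$. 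The triangle inequality for $\zp[n]{\cdot}_1$ reduces the claim to showing that $\delta_{(m_n)}$ times the first term and $\delta_{n,y}$ times the second term each vanish in probability; since $\min(\delta_{n,y},\delta_{(m_n)})$ bounds both prefactors from below, this suffices.

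For the optimization term I would invoke the assumed complexity/convergence budget of the L-BFGS-B routine: the hypotheses fix that the algorithm targets the global minimum of the optimum problem with cost $O(\delta_{(m_n)})$ and that all iterates remain in the compact hyperrectangle $\mathcal{K}$. On $\mathcal{K}$ the loss is smooth, so a standard convergence rate for the iterates gives $\delta_{(m_n)}\zp[n]{\hat{\bm\Theta}^{(m_n)}_y-\hat{\bm\Theta}^\star_y}_1\to 0$ essentially by the definition of $\delta_{(m_n)}$ as the inverse convergence speed of the scheme. The statistical term is where the real work of the paper lives: the idea is to transfer convergence of $\hat{\bm\xi}_{n,y}$ to $\bm\xi_y$ through the loss function via Proposition \ref{prop:main}, which supplies the rate at which $\mathcal{L}_{\hat{\bm\xi}_{n,y}}$ and its gradient approach $\mathcal{L}_{\bm\xi_y}$ and $\nabla\mathcal{L}_{\bm\xi_y}$ uniformly on $\mathcal{K}$. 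The scaling $\delta_{n,y}$ is pinned below $\min_{-1\le j\le d}\tilde\delta_{j,n,y}$ in \eqref{eq:scaling2.5} precisely so that it is dominated by each individual estimation rate: the tail-dependence/copula rate $\tilde\delta_{-1,n,y}$, the functional central-limit rate $\tilde\delta_{0,n,y}$ for $\hat\gamma_{n,y}$ and $\hat c_{j,n,y}$, and the second-order bias rates $\tilde\delta_{j,n,y}$.

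To turn loss convergence into parameter convergence one needs a quantitative identifiability statement near $\bm\Theta^\star_y$: because the BFGS iterates and $\bm\Theta^\star_y$ all live in $\mathcal{K}$, I would argue that $\nabla\mathcal{L}_{\bm\xi_y}$ is nondegenerate in a neighbourhood of the root $\bm\Theta^\star_y$ (the Hessian being invertible there, which is what makes $\bm\Theta^\star_y$ a genuine isolated minimizer of the squared-norm loss \eqref{eq:loss}), so that a first-order Taylor expansion converts the gradient estimate into a bound $\zp[n]{\hat{\bm\Theta}^\star_y-\bm\Theta^\star_y}_1 = O_{\p}\big(\zp[n]{\nabla\mathcal{L}_{\hat{\bm\xi}_{n,y}}(\bm\Theta^\star_y)}_1\big)$. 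Feeding the Proposition \ref{prop:main} rate into this inequality and multiplying by $\delta_{n,y}$ gives the required $o_{\p}(1)$ since $\delta_{n,y}$ sits strictly below every estimation rate.

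The hard part will be the statistical term, and within it the delicate passage through the functional estimators. Establishing the uniform-on-$\mathcal{K}$ rate in Proposition \ref{prop:main} is itself substantial: the loss $\vfi_{k,y}$ in \eqref{eq:vfi} depends on $\hat\lam_{j,k,n,y}$ through an \emph{integral} over $[\beta_j/\beta_k,+\infty)$, so controlling its fluctuation requires the integrability tail control of Assumption \ref{hyp:integrability} (the condition $\gamma_y<\eta_y$ guaranteeing the integral converges and that the tail contribution beyond any large truncation is negligible), combined with the small-ball-probability machinery governing the Nadaraya--Watson kernel weights \eqref{eq:weight}. Assembling the three heterogeneous rates $\tilde\delta_{-1,n,y},\tilde\delta_{0,n,y},\tilde\delta_{j,n,y}$ into the single bound controlled by $\delta_{n,y}$, while keeping the bias terms (Assumptions \ref{hyp:regime2}, \ref{hyp:beta}) below the stochastic fluctuations (Assumption \ref{hyp:regime}), is the technical crux; the reduction to the two-term decomposition above is comparatively routine once Proposition \ref{prop:main} and the nondegeneracy of the Hessian at $\bm\Theta^\star_y$ are in hand.
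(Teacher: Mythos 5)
Your proposal is correct in outline but follows a genuinely different decomposition from the paper's. You split the error at $\hat{\bm\Theta}^\star_y$, the exact minimizer of the approximated problem \eqref{eq:optimization_approx}, separating an optimization error (iterate vs.\ approximated minimizer) from a statistical error (approximated minimizer vs.\ true minimizer). The paper instead splits at ${\bm\Theta}^{(m_n)}_y$, the $m_n$-th iterate of the \emph{same algorithm run on the theoretical problem} \eqref{eq:optimization}: it bounds $\zp[n]{\hat{\bm\Theta}^{(m_n)}_y-{\bm\Theta}^{(m_n)}_y}_1=O_{\p}(\delta_{n,y}^{-1})$ uniformly in $m_n$ by tracking the BFGS updates under the uniform gradient convergence \eqref{eq:mainsup12}, and bounds $\zp[n]{{\bm\Theta}^{(m_n)}_y-{\bm\Theta}^\star_y}_1=O(\delta_{(m_n)}^{-1})$ directly from the assumed algorithmic complexity. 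The trade-off is real: the paper's route never needs $\hat{\bm\Theta}^\star_y$ to exist, be unique, be consistent, or lie in $\mathcal{K}$, and never touches the Hessian; its price is an asserted (not proved) stability of the BFGS iterates under perturbation of the objective, uniform in the iteration count. Your route is the standard statistical-error/optimization-error decomposition and is more transparent, but it needs two things the paper's hypotheses do not grant: (i) nondegeneracy of the Hessian of $\mathcal{L}_{\bm\xi_y}$ at $\bm\Theta^\star_y$ --- the theorem is stated ``under the previous hypotheses,'' and no such identifiability condition appears there (the paper even stresses that the loss is not convex); and (ii) the rate $\delta_{(m_n)}$ for the algorithm applied to the \emph{random} objective $\mathcal{L}_{\hat{\bm\xi}_{n,y}}$, whereas the stated assumption grants this complexity only for the deterministic problem \eqref{eq:optimization}, so you would additionally have to argue that the rate's constants are uniform in $n$ (plausible on $\mathcal{K}$ via \eqref{eq:mainsup11}--\eqref{eq:mainsup12}, but a step the paper's route avoids).

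One technical misstep in your statistical term: the bound $\zp[n]{\hat{\bm\Theta}^\star_y-\bm\Theta^\star_y}_1=O_{\p}\big(\zp[n]{\nabla\mathcal{L}_{\hat{\bm\xi}_{n,y}}(\bm\Theta^\star_y)}_1\big)$, as you derive it, comes from expanding $\nabla\mathcal{L}_{\hat{\bm\xi}_{n,y}}$ around $\bm\Theta^\star_y$, and therefore requires invertibility of the \emph{approximated} Hessian near $\bm\Theta^\star_y$; Proposition \ref{prop:main} controls only the loss and its gradient, not second derivatives, so this is not available. The fix is to expand in the other direction: use the first-order condition $\nabla\mathcal{L}_{\hat{\bm\xi}_{n,y}}(\hat{\bm\Theta}^\star_y)=\bm 0$ together with invertibility of the \emph{true} Hessian near $\bm\Theta^\star_y$ to get $\zp[n]{\hat{\bm\Theta}^\star_y-\bm\Theta^\star_y}_1 \lesssim \zp[n]{\nabla\mathcal{L}_{\bm\xi_y}(\hat{\bm\Theta}^\star_y)}_1 \le \sup_{\bm\Theta\in\mathcal{K}}\zp[n]{\nabla\mathcal{L}_{\bm\xi_y}(\bm\Theta)-\nabla\mathcal{L}_{\hat{\bm\xi}_{n,y}}(\bm\Theta)}_1 = o_{\p}(\delta_{n,y}^{-1})$, which needs only your nondegeneracy assumption plus consistency of $\hat{\bm\Theta}^\star_y$ (itself obtainable from \eqref{eq:mainsup11} and well-separation of the minimum).
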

The proof of Theorem \ref{th:main2} is postponed  below.

In order to estimate functional MEEs, we draw our attention on \eqref{eq:system_CEME}. A direct consequence is that, if unique, the limit $\bm \Theta=(\eta,\beta_2,\ldots,\beta_d)$ of $(  \frac{1-\al}{\overline{F}_{1,y}\left(\bm{e}_\al^1(\bm X,y)\right)},\frac{\bm{e}_\al^2(\bm X,y)}{\bm{e}_\al^1(\bm X,y)} ,\ldots, \frac{\bm{e}_\al^d(\bm X,y)}{\bm{e}_\al^1(\bm X,y)})$, as $\al \to 1$, satisfies the optimum problem \eqref{eq:optimization} and \begin{align*}
	&\bm{e}_\al( \bm X,y)\sim F^{-1}_{1,y}(\al)\eta^{\gamma_y}\left( 1,\beta_2,\ldots,\beta_d\right)^t,\quad \al \to 1.
\end{align*}

{Notice that the optimum convergence  in \cite{BeckMailhotElena2021} is given \emph{via} a non-interchangeable two limits result. The first concerns the convergence of the statistical approximation of the loss function, the second the  convergence of the descent gradient algorithm. The interested reader is referred to Algorithm 2 and Corollary 3 in \cite{BeckMailhotElena2021}.}
{Conversely,  our result unifies both limits $n,m\to +\infty$ into a single limit, i.e.,  
\begin{align}\label{eq:doublelimit2}\hat{\bm \Theta}^{(m_n)}_{y}\xrightarrow[n\to +\infty]{\p}  {\bm \Theta}^{\star}_{y}.\end{align} 

The speed rate $\delta_{n,y}$ in Theorem \ref{th:main2}, is associated to the  statistical approximation of the loss function. The second rate $\delta_{(m)}$ is inherited of the used minimization algorithm. Considering the double limit $m:=m_n\gg 1$, the global speed rate of (\ref{eq:doublelimit2}) naturally involves a compromise between the two scaling sequences. For sake of clarity, in Remark \ref{remarkscales} below, we present some  more practical expressions on the  speed rate  $\delta_{n,y}$. 
 
\begin{rmk}[About the rate $\delta_{n,y}$]\label{remarkscales}
Under Assumption \ref{hyp:regime2}, the condition on the scaling sequence $\delta_{n,y}$ in \eqref{eq:scaling2.5}  becomes
	\begin{align}\label{eq:scaling4}
		1\ll \delta_{n,y}\ll  \min\limits_{\substack{-1\le j \le d\\j\ne -1}}\tilde \delta_{j,n,y}.
	\end{align} 
Notice that explicit expressions for the auxiliary functions $A_{k,y}$ are available (e.g., \cite{Mao}) for particular distributions. 
For instance,  
one can consider  the Lomax (or Pareto II) conditional marginal distributions, which we generically denote by $X_j|Y=y\sim {\rm Lomax}(1/\gamma_y,s_j)$ where $\gamma_y,s_j>0$ for $y\in E$ and $1\le j \le d$. For this model, the survival {conditional distribution} function is $\overline{F}_{j,y}(x) = (1+x/s_j)^{-1/\gamma_y} \in {\rm 2RV}_{-1/\gamma_y,-1},$ 
or equivalently, the {conditional} tail quantile function is $U_{j,y}(t)=s_j(t^{{\gamma_y}}-1)\in {\rm 2RV}_{\gamma_y, -\gamma_y}$   with auxiliary function $A_{y}(t)= {\gamma_y}t^{-{\gamma_y}}$. In this case, the speed rate in \eqref{eq:scaling4} has a simpler expression \begin{align*}
		1\ll \delta_{n,y}\ll  \min\left\{ \delta_{-1,n,y},\delta_{0,n,y},{\gamma_y}(1-\al_n)^{-{\gamma_y}} \right\}.
\end{align*}

Further, if one considers $k_n = n\psi_y(h_n)$ which satisfies $1\ll k_n\ll n$, one may write $ \min(\delta_{0,n,y},\, \delta_{-1,n,y}) =(n\psi_y(h_n))^{ \min(\ff{2},\, \mu_y) }$ and the relation $$ 1\ll \delta_{n,y}\ll \min \left({\gamma_y}(1-\al_n)^{-{\gamma_y}} ,(n\psi_y(h_n))^{ \min( \ff{2},\, \mu_y) }\right). $$

Another possibility is to suppress the auxiliary functions dependence by considering  $ \delta_{0,n,y} = \delta_{-1,n,y} (1+o(1))$ as $n\to +\infty$, or equivalently $k_n \sim n(n\psi_y(h_n))^{-1/(2\mu_y)}$. This particular choice of $k_n$ gives under the condition $1\ll k_n \ll n$ the new regime   $ \psi_y(h_n)^{1/{(2\mu_y)}} \ll n^{1-1/{(2\mu_y)}} $.  
\end{rmk}

\begin{rmk}[About the minimization algorithm and rate $\delta_{(m_n)}$]\label{rmk:algo} At the moment, we are essentially facing two possibilities for quasi-Newton methods (which have less complexity than the classic Newton ones). The first one is the classic BFGS method as described by Algorithm 6.1 in \cite{nocedal2006} or Algorithm 1 in \cite{BeckMailhotElena2021}. For such algorithm, each iteration costs $O(d^2)$ arithmetic operations. Hence, the whole computational cost is $O(d^2m)$ where $m$ is the number of iterations, and we set $\delta_{(m)}= d^2m$. On the other hand, one may preferably use the L-BFGS-B version,  which is designed for bound constrained optimization with limited memory storage. The motivations for this particular algorithm are threefold. First, it seems reasonable to consider MEEs only in a certain range. Second, no informations about the Hessian matrix or about the structure of the loss function are required. Besides, when $m:=m_n\gg 1$ and $n$ is the number of observations, the upcoming proof of Theorem \ref{th:main2} requires uniform convergence results. The computational cost of one iteration in the L-BFGS-B algorithm is linear in $d$, namely $O(p^2d)$ with $3\le p \le 20$ controlling the amount of storage (see \cite{Zhu}). Thus, we associate the speed rate $\delta_{(m)} = p^2dm$ to this case.
\end{rmk}

To prove  Theorem \ref{prop:main}, we proceed in a two-step demonstration.


\begin{itemize}\begin{sloppypar}
  \item[i)] Firstly, we build the approximated minimization problem  in \eqref{eq:optimization_approx} by considering an estimator vector  $\bm{\hat \xi}_{n,y} = (\hat \gamma_{n,y},\hat c_{2,n,y},\ldots,\hat c_{d,n,y},\hat \lam_y)$. Once the consistency of $\bm{\hat \xi}_{n,y}$ is assured, we prove uniform convergence with associated speed rate of the approximated loss function in Definition \ref{defLossApprox} and its gradient function to their theoretical counterparts.  \end{sloppypar}
  \item[ii)]  Then we  link the minimization algorithm with the estimation procedure by inducing a dependency of the iteration steps on the data sample size $n$. Namely, we allow $m$ to depend on $n$ so that $m=m_n\gg 1$. This translates the fact that we are simultaneously consistently approximating the optimum problem \eqref{eq:optimization} and solving the resulting approximated optimum problem \eqref{eq:optimization_approx} using the BFGS minimization algorithm. 
\end{itemize}

\begin{proof}[Proof of Theorem \ref{th:main2}]
The proof of Theorem \ref{th:main2} heavily relies on the convergence of the loss function and its gradient Moreover, the validity of step ii) can be proved  if the gradient convergence is uniform. These results are gathered in the following intermediate proposition.  The proof of Proposition \ref{prop:main} is postponed to Section \ref{sectionProof}.  
 
\begin{Prop}[Approximated loss {function} convergence  with rate]\label{prop:main}\textit{Assume that assumptions of Theorem \ref{prop:main} hold true. Then,
	\begin{align}\label{eq:main}
		\delta_{n,y} 	\zp[b]{\mathcal{L}_{\bm \xi_y} ( \bm \Theta  )- \mathcal{L}_{\hat{ \bm \xi}_{n,y}} ( \bm \Theta  )}&\xrightarrow[n\to +\infty]{\p}0,
	\end{align}
	\begin{align}\label{eq:main_gradient}
		\delta_{n,y} 	\zp[n]{\nabla\mathcal{L}_{\bm \xi_y} ( \bm \Theta  )- \nabla \mathcal{L}_{\hat{ \bm \xi}_{n,y}} ( \bm \Theta  )}_1&\xrightarrow[n\to +\infty]{\p}0.
\end{align}
Furthermore if we restrict $\bm \Theta \in \prod_{j=1}^d [\eps_j,M_j]$ with $0<\eps_j<M_j<+\infty$ for any $1\le j\le d$, we get \begin{align}\label{eq:mainsup11}
	\delta_{n,y} \sup_{\bm \Theta \in \prod_{j=1}^d [\eps_j,M_j]}	\zp[b]{\mathcal{L}_{\bm \xi_y} ( \bm \Theta  )- \mathcal{L}_{\hat{ \bm \xi}_{n,y}} ( \bm \Theta  )}&\xrightarrow[n\to +\infty]{\p}0,
\end{align}
	\begin{align}\label{eq:mainsup12}
		\delta_{n,y} \sup_{\bm \Theta \in \prod_{j=1}^d [\eps_j,M_j]}	\zp[n]{\nabla\mathcal{L}_{\bm \xi_y} ( \bm \Theta  )- \nabla \mathcal{L}_{\hat{ \bm \xi}_{n,y}} ( \bm \Theta  )}_1&\xrightarrow[n\to +\infty]{\p}0.
\end{align} }
\end{Prop}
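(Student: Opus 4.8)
The plan is to reduce all four limits to the consistency, \emph{with rate}, of the plug-in vector $\hat{\bm\xi}_{n,y}=(\hat\gamma_{n,y},\hat c_{2,n,y},\ldots,\hat c_{d,n,y},\hat\lambda_y)$, and then to propagate those rates through the map $\bm\xi\mapsto\bm\varphi_y(\bm\Theta,\bm\xi)$ of \eqref{eq:vfi}. \textbf{Step 1 (component rates).} First I would record that, under the assumptions of classes I--III and using the functional estimators of Section \ref{StatTOOLS} (as in \cite{GG2012,GSU2022,irene}), each coordinate error of $\hat{\bm\xi}_{n,y}$ is $O_{\p}$ of the inverse of one of the sequences in \eqref{eq:scaling1}: the functional Hill estimator $\hat\gamma_{n,y}$ and the ratio estimators $\hat c_{j,n,y}$ are governed by the effective-sample-size rate $\tilde\delta_{0,n,y}=\sqrt{n\overline{\alpha}_n\psi_y(h)}$ together with the second-order bias controlled through \ref{hyp:regime2} and $\tilde\delta_{j,n,y}=A_{j,y}^{-1}(\overline{\alpha}_n^{-1})$, while the bivariate estimators $\hat\lambda_{j,k,n,y}$ converge uniformly on compact subsets of $\mathbb{R}^2_+$ at a rate governed by $\tilde\delta_{0,n,y}$ and by the first-order copula bias $\tilde\delta_{-1,n,y}=(n/k_n)^{\mu_y}$ from \eqref{hyp:rate_theoretical}. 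Because $\delta_{n,y}\ll\min_{-1\le j\le d}\tilde\delta_{j,n,y}$ by \eqref{eq:scaling2.5}, it then suffices to bound each contribution to $|\varphi_{k,y}(\bm\Theta,\hat{\bm\xi}_{n,y})-\varphi_{k,y}(\bm\Theta,\bm\xi_y)|$ by a constant times the corresponding coordinate error.

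\textbf{Step 2 (propagation: the algebraic part).} I would split $\varphi_{k,y}=A_k(\bm\Theta,\gamma,\bm c)+B_k(\bm\Theta,\bm\xi)$, where $A_k$ collects the terms $\tfrac{\gamma}{\gamma-1}-\eta\,\beta_k^{1/\gamma}c_k^{-1}(1+\sum_{j\ne k}\beta_j/\beta_k)$ and $B_k=\sum_{j\ne k}\int_{\beta_j/\beta_k}^{+\infty}\lambda_{j,k,y}(\tfrac{c_j}{c_k}t^{-1/\gamma},1)\,\mathrm{d}t$. Since $\bm X\in(L^1(\Omega))^d$ forces $\gamma_y\in(0,1)$, for $n$ large $\hat\gamma_{n,y}$ stays bounded away from $1$ and $\hat{\bm c}$ away from $0$, so with $\bm\Theta$ in a hyperrectangle $A_k$ is a smooth function of its arguments; a first-order Taylor expansion then gives $|A_k(\cdot,\hat\gamma,\hat{\bm c})-A_k(\cdot,\gamma,\bm c)|\le C(|\hat\gamma_{n,y}-\gamma_y|+\sum_j|\hat c_{j,n,y}-c_{j,y}|)$ with $C$ uniform in $\bm\Theta$. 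This is the routine part.

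\textbf{Step 3 (the integral --- the main obstacle).} The core difficulty is $B_k$: its integrand is built from a nonparametrically estimated function, the argument is distorted by the errors in $\gamma$ and $\bm c$, and the range is unbounded. I would truncate at a level $T=T_n$ and split $[\beta_j/\beta_k,+\infty)=[\beta_j/\beta_k,T]\cup[T,+\infty)$. On the compact piece I decompose $\hat\lambda_{j,k,n,y}(\hat{\mathrm{arg}})-\lambda_{j,k,y}(\mathrm{arg})=[\hat\lambda_{j,k,n,y}(\hat{\mathrm{arg}})-\lambda_{j,k,y}(\hat{\mathrm{arg}})]+[\lambda_{j,k,y}(\hat{\mathrm{arg}})-\lambda_{j,k,y}(\mathrm{arg})]$: the first bracket is controlled by the uniform-on-compacts rate of Step 1, and the second by the Lipschitz property of stable tail dependence functions (which are $1$-Lipschitz in the $\ell_1$-norm), giving a bound $C_T(|\hat\gamma-\gamma|+\sum_j|\hat c_j-c_j|)$ uniformly for $t\le T$. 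On the tail piece I invoke \ref{hyp:integrability}: as $t\to+\infty$ the first argument tends to $0$, so both $\lambda_{j,k,y}(\tfrac{c_j}{c_k}t^{-1/\gamma},1)$ and its estimator are $O_{\p}(t^{-\eta_{j,k,y}/\gamma})$, and since $\gamma_y<\eta_y$ the exponent exceeds $1$, whence the tail contribution is $O_{\p}(T^{1-\eta_y/\gamma_y})$. The strict separation $\delta_{n,y}\ll\tilde\delta_{j,n,y}$ leaves enough room to choose $T_n\to+\infty$ so that $\delta_{n,y}$ times each of the bulk and tail terms vanishes, yielding $|B_k(\cdot,\hat{\bm\xi})-B_k(\cdot,\bm\xi)|=o_{\p}(\delta_{n,y}^{-1})$.

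\textbf{Step 4 (loss, gradient, and uniformity).} Writing $2\mathcal{L}_{\bm\xi_y}(\bm\Theta)-2\mathcal{L}_{\hat{\bm\xi}_{n,y}}(\bm\Theta)=\sum_k(\varphi_{k,y}(\cdot,\bm\xi_y)-\varphi_{k,y}(\cdot,\hat{\bm\xi}_{n,y}))(\varphi_{k,y}(\cdot,\bm\xi_y)+\varphi_{k,y}(\cdot,\hat{\bm\xi}_{n,y}))$ and using that $\varphi_{k,y}$ is bounded on the hyperrectangle, \eqref{eq:main} follows from Steps 2--3 after multiplying by $\delta_{n,y}$. For \eqref{eq:main_gradient} I use $\nabla\mathcal{L}=\sum_k\varphi_{k,y}\nabla_{\bm\Theta}\varphi_{k,y}$ and repeat the analysis for $\nabla_{\bm\Theta}\varphi_{k,y}$; differentiating $B_k$ in $\eta,\beta_k,\beta_j$ via Leibniz's rule produces a boundary term $\lambda_{j,k,y}$ evaluated at the \emph{finite} endpoint $\beta_j/\beta_k$ plus a further integral of the same tail type, both treated exactly as in Step 3, so no new integrability issue arises. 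Finally, \eqref{eq:mainsup11}--\eqref{eq:mainsup12} follow because every constant above is uniform over $\bm\Theta\in\prod_j[\eps_j,M_j]$: there the ratios $\beta_j/\beta_k$ lie in a fixed compact subinterval of $(0,+\infty)$, $\eta$ is bounded, and the truncation and the uniform rate of $\hat\lambda_{j,k,n,y}$ are $\bm\Theta$-free, so taking the supremum preserves all bounds. I expect Step 3 to be the principal obstacle --- controlling the tail of the integral uniformly for the \emph{estimated} tail dependence function and balancing $T_n$ against the estimation rates so that the product with $\delta_{n,y}$ still vanishes is the delicate point.
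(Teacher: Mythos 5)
Your overall skeleton matches the paper's: the paper reduces \eqref{eq:main}--\eqref{eq:mainsup12} via the $\Delta$-method to the building block \eqref{eq:main3}--\eqref{eq:main3sup}, splits each $\vfi_{k,y}$ difference into the three terms $\Delta^{(1)}_{n,y}$, $\Delta^{(2)}_{k,n,y}$, $\Delta^{(3)}_{j,k,n,y}$, treats the first two by Taylor expansion/$\Delta$-method (your Step 2), and obtains the loss and gradient statements together with uniformity over the hyperrectangle exactly as in your Step 4. One caveat on your Step 1: the rate for $\hat c_{j,n,y}-c_{j,y}$ is \emph{not} an off-the-shelf citation. The paper has to prove it (Lemma \ref{lem:speed_beta}), and this rests on the Karamata/$\Pi$-class expansion of the slowly varying parts of the tail quantile functions (Lemma \ref{lem:slowly_asymptotic_rho}), including the delicate second-order case $\rho_{j,y}=0$; the references \cite{GG2012,GSU2022} cover $\hat\gamma_{n,y}$ and the conditional quantiles, not the tail ratios.

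The genuine gap is in Step 3, which you correctly identify as the crux, but your truncation-and-balancing argument does not deliver the rate claimed in the statement. The paper does not truncate: it substitutes $u=\frac{c_{j,y}}{c_{k,y}}t^{-1/\gamma_y}$ (and its estimated analogue), mapping the half-line onto the \emph{compact} interval $[0,M_y(\bm\Theta)]$, converting the argument distortion into the map $\hat s_{j,k,n,y}(t)=\hat\sigma_{n,y}\,t^{\gamma_y/\hat\gamma_{n,y}}$, and then concluding with the $1$-Lipschitz property of $\lam_{j,k,y}$ together with uniform-with-rate convergence of $\hat s_{j,k,n,y}$ to the identity (Lemma \ref{lem:s_n-t}, proved via the Dini-type Lemma \ref{lem:dini}) and of $\hat\lam_{j,k,n,y}$ to $\lam_{j,k,y}$ on compacts (Lemma \ref{lem:lam_conv}); Assumption \ref{hyp:integrability} enters to control the Jacobian factor $t^{-(\gamma_y+1)}$ near the origin. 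Your route instead produces a bulk estimation term of order $T_n a_n$ (where $a_n$ is the uniform-on-compacts rate of $\hat\lam_{j,k,n,y}-\lam_{j,k,y}$, so $a_n^{-1}$ is essentially $\min(\tilde\delta_{0,n,y},\tilde\delta_{-1,n,y})$) and a tail term of order $T_n^{1-\eta_y/\gamma_y}$. Requiring $\delta_{n,y}T_na_n\to0$ and $\delta_{n,y}T_n^{1-\eta_y/\gamma_y}\to0$ simultaneously forces $T_n\gg\delta_{n,y}^{\gamma_y/(\eta_y-\gamma_y)}$ and $T_n\ll(\delta_{n,y}a_n)^{-1}$, which are compatible only if $\delta_{n,y}^{\eta_y/(\eta_y-\gamma_y)}a_n\to0$. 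Since $\eta_y/(\eta_y-\gamma_y)>1$, this is \emph{strictly stronger} than \eqref{eq:scaling2.5}: for $\delta_{n,y}$ chosen near its permitted ceiling (say $\delta_{n,y}=\min_{j}\tilde\delta_{j,n,y}/\ln n$ with that minimum attained at $\tilde\delta_{0,n,y}$ or $\tilde\delta_{-1,n,y}$ growing polynomially), no admissible $T_n$ exists. Optimizing $T_n$ yields only the rate $a_n^{(\eta_y-\gamma_y)/\eta_y}$, a strict polynomial loss, so your sentence claiming the strict separation ``leaves enough room to choose $T_n$'' is precisely where the proof breaks; as written, your argument establishes the proposition only for the smaller family of rates $\delta_{n,y}\ll\bigl(\min(\tilde\delta_{0,n,y},\tilde\delta_{-1,n,y})\bigr)^{(\eta_y-\gamma_y)/\eta_y}$.

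A final remark, in fairness to your instinct that this integral is the principal obstacle: after the paper's change of variables the singular Jacobian weight at the origin must itself be handled with the envelope from \ref{hyp:integrability} (the paper's displayed formulas for $\Delta^{(3,1)}$ and $\Delta^{(3,2)}$ suppress this weight), so the near-origin/far-tail tension you uncovered is real; the compactification device is the paper's mechanism for keeping everything at the sup-norm rate, and any correct proof must replace your linear-in-$T_n$ bulk bound by an argument of this kind.
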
  
{Morever, in Theorem \ref{th:main2},} we assume that the minimization algorithm solves for the global minimum of \eqref{eq:optimization}. Once \eqref{eq:mainsup11} and \eqref{eq:mainsup12}  are established {(see also Equation \eqref{eq:main3sup})}, one may decompose with the triangular inequality
\begin{align*}
	\zp[n]{\hat{\bm \Theta}^{(m_n)}_{y}- {\bm \Theta}^{\star}_{y}}_1&\le 	\zp[n]{\hat{\bm \Theta}^{(m_n)}_{y}- {\bm \Theta}^{(m_n)}_{y}}_1 + 	\zp[n]{{\bm \Theta}^{(m_n)}_{y}- {\bm \Theta}^{\star}_{y}}_1
\end{align*}

{The first quantity translates the cost of estimating the $m_n^{\text{th}}$ step of the minimization algorithm,  while the second quantity is purely inherent to the nature of the used  minimization algorithm.} Concerning the minimization algorithm, we may consider any suitable gradient descent algorithm with complexity  $\delta_{(m)}$, so the whole cost is 
\begin{align*}
	\zp[n]{{\bm \Theta}^{(m_n)}_{y}- {\bm \Theta}^{\star}_{y}}_1&= O(\delta^{-1}_{(m_n)}).
\end{align*}
 
By using the classical steps in the BFGS quasi-Newton optimization algorithm (see, e.g., Algorithm 1 in \cite{BeckMailhotElena2021}), once the uniform convergence \eqref{eq:mainsup12} holds, it immediately follows that, independently of $m_n$, $$	\zp[n]{\hat{\bm \Theta}^{(m_n)}_{y}- {\bm \Theta}^{(m_n)}_{y}}_1 	= O_{\p}\left( \delta^{-1}_{n,y}\right). $$
Hence the desired result. 
\end{proof}

\subsection{Hypothesis discussion}\label{Hypdiscussion}

Quantiles are the building blocks of the estimation procedure as they play a crucial role in  the estimation of the tail index and ratios. If Conditions \ref{hyp:model1}, \ref{hyp:kernel}, \ref{hyp:regime}, \ref{hyp:oscillation} are satisfied, one can prove the following asymptotic normality of the extreme conditional quantile estimator (see for instance Proposition 1 in \cite{GSU2022} and Theorem 2 in \cite{GG2012}),
\begin{align}\label{eq:quantile}\tilde \delta_{0,n,y} \left( \frac{\hat {q}_{k,n,y}({\al}_n)}{ {q}_{k,y}({\al}_n)} -1\right)&\xrightarrow[n\to +\infty]{d} \mathcal{N}(0,1),\quad 1\le k \le d,
\end{align} 
where $\tilde \delta_{0,n,y}$ is as in \eqref{eq:scaling1}.
If in addition \ref{hyp:regime2} is fulfilled, \cite[Theorem 7]{GSU2022} extends the Gaussian fluctuations for the functional tail index (see also Theorem 4 in \cite{GG2012}),
\begin{align}\label{eq:gamma}\tilde \delta_{0,n,y}\left( \hat{\gamma}_{n,y}-\gamma_y\right) &\xrightarrow[n\to +\infty]{d} \mathcal{N}\Big( \f{p_1}{\ln (J!)}\sum_{j=2}^J \f{j^{\rho_{1,y}}-1}{\rho_{1,y}}, \f{J(J-1)(2J-1)}{6\log(J!)}\gamma^2(y) \Big),
\end{align}
with $\tilde \delta_{0,n,y}$ as in \eqref{eq:scaling1}, $\gamma_y$ as in  Condition \ref{hyp:model1} with associated  estimator $\hat \gamma_{n,y}$ as in \eqref{eq:Hill2}, $\rho_{1,y}$  the second order tail index from Condition \ref{hyp:model1}, $p_1\in \R$ as in  Condition \ref{hyp:regime2} and $J$ a positive integer (e.g., $J=9$).

Concerning the fluctuations of the empirical copula in this functional setting, one may invoke \cite[Theorem 3]{irene}. When each marginal $F_{j,y}$ is continuous and Assumptions \eqref{hyp:segers}, \ref{hyp:regime}, \ref{hyp:tau}, \ref{hyp:continuity}, \ref{hyp:beta}, \ref{hyp:kernel} are satisfied, it yields
\begin{align}\label{eq:irene}\sqrt{n\psi_y(h_n)} \sup_{\bm u \in [0,1]^2} \zp[b]{ \hat C_{j,k,n,y}(\bm u)   - C_{j,k,y} (\bm u)  } &=O_{\p}(1),
\end{align} where $\psi_y(h_n)$ is the small probability in  \eqref{smallBall}, $C_{j,k,y}$ the conditional bivariate copula (of the margins $j,k$) with associated  estimator $\hat C_{j,k,n,y}$ defined in \eqref{eq:copula_estimator}.
\\
We devote the rest of this section to illustrate the role of the required hypothesis 
in our Theorem \ref{th:main2}, in particular in order to guarantee the  crucial convergences in \eqref{eq:quantile}-\eqref{eq:gamma}-\eqref{eq:irene}.

\begin{itemize}
\item  In view of \cite[Lemma 4]{GG2012}, the regime 
 in \ref{hyp:regime} is a necessary and sufficient condition for the almost sure presence of at least one sample point in the region $B(y,h_n)\times (q_{j,y}({\al}_n),+\infty)$ for any $1\le j \le d$, that is, $q_{j,y}(\al_n)$ is within the sample. Therefore, it makes sure that, as the number of data sample points increases, the number of observations larger than the conditional quantile increases.
\item  The hypothesis \ref{hyp:kernel} gathers conditions on the kernel for the simultaneous application of convergences in  \eqref{eq:quantile}-\eqref{eq:gamma}-\eqref{eq:irene}. It is hence a combination of Type I kernel, according to \cite[Definition 4.1]{FerratyVieu2006}, required for \eqref{eq:quantile}-\eqref{eq:gamma}, and of continuous kernel versions.  
\item  Concerning   univariate quantiles and tail index, we require Conditions \ref{hyp:oscillation} and \ref{hyp:regime2} for applying \eqref{eq:quantile}-\eqref{eq:gamma} and proving Proposition \ref{lem:speed_integral} and Lemma \ref{lem:speed_beta}, \ref{lem:s_n-t}, \ref{lem:lam_conv}. The first Condition \ref{hyp:oscillation} allows to control the
oscillations of the survival  distribution function when the covariate is varying.  For a deeper discussion, we refer to the end of Section 2 in \cite{GSU2022}. On the other hand, as well-known in the extreme literature, the  Hill type estimators require a second order regular variation condition (see also Condition \ref{hyp:model2}) in order to determine their rate convergence and fluctuations. The estimator in \eqref{eq:Hill2} of the tail index is built upon the first marginal empirical quantile without any second order related object involved. This lacking in the tail approximation generates a bias that we should assume \emph{via} \ref{hyp:regime2} to be negligible in the scale $\tilde \delta_{n,y,0}$. Inherited from Theorem 7 in \cite{GSU2022}, Condition \ref{hyp:regime2} links the asymptotic of the first marginal auxiliary function $A_{1,y}$ with the small ball probability regime.
\item  Condition \ref{hyp:integrability} is an integrability requirement, used in Proposition \ref{lem:speed_integral}, for manipulating conditional stable tail dependence functions and their estimators as they appear through integrals in the loss function \eqref{eq:loss}. 
\item Assumptions \ref{hyp:tau}, \ref{hyp:continuity}, \ref{hyp:beta} are technical requirements for \eqref{eq:irene} related to the functional aspect and appearing in Proposition \ref{lem:speed_integral} and Lemma \ref{lem:lam_conv}.  For instance, the authors of \cite{irene} used  Condition \ref{hyp:continuity} for tightness and equicontinuity in the considered empirical processes.  In particular, the object $\tau_{y,h}$  in Assumption \ref{hyp:tau} was first introduced in \cite{Ferraty2007} with the same condition on the limiting function $\tau_{y,0}$. {The connection with assumption on $M_1$ in  Condition \ref{hyp:kernel} comes down from the fact that kernel expectations are key objects (see \cite{GG2012,GSU2022}) in functional kernel methods and may be written as $\E_Y[ K(h^{-1}_n{\zp[n]{Y-y}_E}) ]= K(1)\psi_y(h_n) - \int_0^1K'(s)\psi_y(s h_n)\mathrm{d}s$ (see \cite[Lemma 2]{Ferraty2007}).} The identification of $\tau_{y,0}$ is driven by the asymptotic behaviour of $s\mapsto \psi_y(s):=\p\left(\zp[n]{Y-y}_E\le s  \right)$.  As stated in \cite{FerratyVieu2006}, the class of fractal (infinite-dimensional) processes is the one for which the small ball probabilities behave as
$\psi_y(s) \sim C_ys^a$, for ${s\to 0}$ and $C_y,a>0$. For such processes, we have $\tau_{y,0}(s)=s^a$. This includes the multivariate Euclidean  case $E=\R^p$, with $a=p$. On the other hand, for the  class of   processes such that $\psi_y(s) \sim C_ys^a\exp\big( -C/s^b \big) $, for ${s\to 0}$ and  some $C_y,C,a,b>0$, one has $\tau_{y,0}(s) = \delta_1(s)$ with $\delta_1$ being the Dirac delta function at point $1$ (see   \cite{Ferraty2007}). As noted in \cite{irene}, for those processes, $\int_0^1 K'(s)\tau_{y,0}(s)\,\mathrm{d}s$ is zero so that one usually assumes $K(1)>0$ in this case. In the general case, the condition $K'(\cdot)\le 0$ is asked. For more details about $\tau_{y,h}$ and $\beta$ in  Condition \ref{hyp:beta}, the interested reader is referred  to Section 2.3, especially Proposition 1 and Section 4, in \cite{Ferraty2007}, and Section 3.2 in \cite{irene}.
\end{itemize}

\section{Proof of Proposition \ref{prop:main}}\label{sectionProof}

Soliciting $\Delta$-method (see, e.g.,  Theorem 3.8 in \cite{vaart_1998}), the proof of \eqref{eq:main} and \eqref{eq:main_gradient} is reduced to the establishment of the building block  	\begin{align}\label{eq:main3}
	\delta_{n,y}  \max_{1\le k\le d}	\zp[b]{	{\vfi}_{k,y} ( \bm \Theta ,\bm \xi_y ) -{\vfi}_{k,n,y} ( \bm \Theta ,\bm{\hat \xi}_{n,y} )}&\xrightarrow[n\to +\infty]{\p}0.
\end{align}
The proof of  \eqref{eq:mainsup11} and \eqref{eq:mainsup12} relies upon, for $0<\eps_r<M_r<+\infty$, $1\le r\le d$,
\begin{align}\label{eq:main3sup}
	\delta_{n,y}  \max_{1\le k\le d}	\sup_{\bm \Theta\in \prod_{{r}=1}^d[\eps_{{r}},M_{{r}}] }\zp[b]{	{\vfi}_{k,y} ( \bm \Theta ,\bm \xi_y ) -{\vfi}_{k,n,y} ( \bm \Theta ,\bm{\hat \xi}_{n,y} )}&\xrightarrow[n\to +\infty]{\p}0.
\end{align}


 In view of \eqref{eq:vfi}, the proof of \eqref{eq:main3} and \eqref{eq:main3sup} comes down by proving the following  convergence
\begin{align}	\label{eq:boil}
	\delta_{n,y} \max \left( \zp[b]{\Delta^{(1)}_{n,y}} ,\zp[b]{\Delta^{(2)}_{k,n,y}(\bm \Theta )},\zp[b]{\Delta^{(3)}_{j,k,n,y}(\bm \Theta )}\right)\xrightarrow[n\to +\infty]{\p} 0,\quad 1\le j\ne k\le d, \quad y\in E,
\end{align}
and its uniform counterpart 
\begin{align}	\label{eq:boilsup}		\delta_{n,y} \sup_{\bm \Theta \in \prod_{{{r}}=1}^{d}[\eps_{{r}},M_{{r}}]} \max \left( \zp[b]{\Delta^{(1)}_{n,y}} , \zp[b]{\Delta^{(2)}_{k,n,y}(\bm \Theta )},\zp[b]{\Delta^{(3)}_{j,k,n,y}(\bm \Theta )}\right)\xrightarrow[n\to +\infty]{\p} 0,
	\end{align}

where  $\bm \Theta = (\eta,\beta_2,\ldots,\beta_d)\in (0,+\infty)^d$, $\beta_1=1$ and 
\begin{align*}
\Delta^{(1)}_{n,y} &:={\frac{\gamma_y}{\gamma_y-1}-\frac{\hat \gamma_{n,y}}{\hat \gamma_{n,y}-1}},\quad
\Delta^{(2)}_{k,n,y}( \bm \Theta) = \eta \left(1+\sum_{j\neq k} \frac{\beta_j}{\beta_k} \right)\left( {\frac{\beta^{1/{\gamma_y}}_k}{c_{k,y}}-\frac{\beta^{1/\hat \gamma_{n,y}}_k}{\hat c_{k,n,y}}}\right) ,
\end{align*}
\begin{align}\label{eq:delta3}
\Delta^{(3)}_{j,k,n,y}(\bm \Theta )= \sum_{j\neq k} \left(\int_{\frac{\beta_j}{\beta_k}}^{+\infty}\hat \lam_{ j,k,n,y}\left(\frac{\hat c_{j,n,y}}{\hat c_{k,n,y}}t^{-1/{\hat \gamma_{n,y}}},1\right) - \lambda_{ j,k,y}\left(\frac{c_{j,y}}{c_{k,y}}t^{-1/{\gamma_y}},1\right) \mathrm{d}t\right).
\end{align} 

A straightforward application of the $\Delta$-method and  \eqref{eq:gamma} implies that for any deterministic $1\ll \delta_{0,n,y}\ll \tilde\delta_{0,n,y}$ and independently on $\bm \Theta\in (0,+\infty)^d$, $$\delta_{0,n,y} \zp[b]{\Delta^{(1)}_{n,y}} \xrightarrow[n\to +\infty]{\p} 0.$$

 This readily provides \eqref{eq:boil} and \eqref{eq:boilsup} for $\Delta^{(1)}_{n,y}$. The quantities $\Delta^{(2)}_{k,n,y}$ and $\Delta^{(3)}_{j,k,n,y}$ are respectively tackled in the upcoming Lemma \ref{lem:speed_beta} and Proposition \ref{lem:speed_integral}. One can obtain the identities for the gradient loss function by adapting the setting in   Appendix A.3.  in  \cite{BeckMailhotElena2021} to our functional covariate case. By using these gradient expressions we can  show the  gradient convergence in \eqref{eq:mainsup11}-\eqref{eq:mainsup12}.  Indeed, the random quantities $\hat \gamma_y$ and $\hat c_{k,y}$ appear through algebraic expressions in the  gradient loss function in the same form as in \eqref{eq:vfi}.  \\
 
The Karamata's results, Theorem B.1.4 - B.1.5, Equation (B.1.14), Remark B.1.7 in \cite{extreme_value_theory_de_haan_ferreira}, see also Theorem 1.3.1 in \cite{bingham_goldie_teugels_1987}, allow us to write
\begin{align}\label{eq:ell_karamata}\ell_{U_{j,y}}(x)&:=\ell_{j,y}(x)=\kappa_j(y)  b_{j,y}(x) \exp \left(\int_{x_0}^x \frac{ b_{j,y}(t)-1}{t}\mathrm{d}t \right),
\end{align}
for some arbitrary $x_0>0$, some constant $\kappa_j(y)\in (0,+\infty)$ and \begin{align}\label{eq:b} b_{j,y}(x) =\f{ x\ell_{j,y}(x)}{\int_{x_0}^x \ell_{j,y}(t)\mathrm{d}t}.
\end{align}
	
Studying the Karamata representation \eqref{eq:ell_karamata} of the slowly varying part $ \ell_{j,y}$ through the point of view of $\Pi$-Class (see Definition \ref{def:pi}), we may show in the following Lemma \ref{lem:slowly_asymptotic_rho}  an asymptotic expansion on $\ell_{j,y}$.\\ 

 Notice that Lemma \ref{lem:slowly_asymptotic_rho} in the case $\rho_{j,y} < 0$ is a straightforward generalization to the conditional setting of Lemma 1 in \cite{HuaJoe2011}. The proof in the specific case where $\rho_{j,y} = 0$ is  postponed to Section \ref{sectionProofAUX}. For examples of 
 distributions satisfying the second order regular variation condition with  $\rho_{j,y} = 0$  we interested reader is referred to Remark \ref{exampleFbar}.

\begin{lem}\label{lem:slowly_asymptotic_rho} Assume that for any $y\in E$, for any $1\le j\le d$, $ U_{j,y}\in 2{\rm{RV}}_{-{\gamma_y},\rho_{j,y}}(+\infty),\, \gamma_y>0, \, \rho_{j,y} \le 0, $ with auxiliary function $A_{j,y}\in {\rm{RV}}_{\rho_{j,y}}(+\infty)$ or equivalently Condition \ref{hyp:model1}.
	Then, for some constant $ \kappa_j(y)\in (0,+\infty)$ independent of~$x$, \begin{align*}\ell_{j,y}(x)&=\kappa_j(y)  \left(1+ O\left(  A_{j,y}(x)\right)  \right), \quad x\to +\infty.
	\end{align*}
\end{lem}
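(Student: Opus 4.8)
The plan is to start from the Karamata representation \eqref{eq:ell_karamata} of the slowly varying part $\ell_{j,y}$ and reduce the claim to a quantitative control of its two ingredients: the prefactor $b_{j,y}(x)$ defined in \eqref{eq:b}, and the integral $\int_{x_0}^x \frac{b_{j,y}(t)-1}{t}\mathrm{d}t$. Since $\ell_{j,y}$ is slowly varying, Karamata's theorem gives $\int_{x_0}^x \ell_{j,y}(t)\,\mathrm{d}t \sim x\,\ell_{j,y}(x)$ and hence $b_{j,y}(x)\to 1$; the whole point is to upgrade this to the rate $b_{j,y}(x)-1 = O(A_{j,y}(x))$ and to show the representation integral converges with a tail that is also $O(A_{j,y}(x))$. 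Writing $\int_{x_0}^x = \int_{x_0}^{+\infty}-\int_x^{+\infty}$, the convergent part is absorbed into a single finite positive constant $\kappa_j(y)$, leaving $\ell_{j,y}(x)=\kappa_j(y)\,b_{j,y}(x)\exp\big(-\int_x^{+\infty}\tfrac{b_{j,y}(t)-1}{t}\mathrm{d}t\big)$, from which the stated expansion follows once both the prefactor and the tail integral are controlled to within $O(A_{j,y}(x))$.

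The key step is to convert the second-order hypothesis \ref{hyp:model1} on $U_{j,y}$ into the bound $b_{j,y}(x)-1=O(A_{j,y}(x))$. Here I would adopt the $\Pi$-class viewpoint announced before the statement: second-order regular variation of $U_{j,y}$ is equivalent to $\ell_{j,y}(tx)/\ell_{j,y}(t)-1 \sim A_{j,y}(t)\,\frac{x^{\rho_{j,y}}-1}{\rho_{j,y}}$, that is, $\ln \ell_{j,y}$ belongs to the $\Pi$-class with auxiliary function a multiple of $A_{j,y}$. Feeding this back into $\int_{x_0}^x \ell_{j,y}$ via the second-order Karamata theorems of \cite[Thm. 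B.1.4--B.1.5]{extreme_value_theory_de_haan_ferreira} produces the second-order correction to $b_{j,y}(x)-1$, whose size is governed by $A_{j,y}\in\mathrm{RV}_{\rho_{j,y}}$. This is exactly the point at which the sign of $\rho_{j,y}\le 0$ enters.

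For $\rho_{j,y}<0$ the argument closes directly and recovers, in the conditional setting, Lemma 1 of \cite{HuaJoe2011}: since $A_{j,y}\in\mathrm{RV}_{\rho_{j,y}}$ with $\rho_{j,y}<0$, a further application of Karamata's theorem yields $\int_x^{+\infty}\frac{A_{j,y}(t)}{t}\,\mathrm{d}t \sim \frac{A_{j,y}(x)}{-\rho_{j,y}}=O(A_{j,y}(x))$, so the prefactor and the tail integral are both $O(A_{j,y}(x))$ and the expansion follows. The main obstacle is the boundary case $\rho_{j,y}=0$: there $A_{j,y}$ is only slowly varying, the tail integral $\int_x^{+\infty}\frac{A_{j,y}(t)}{t}\,\mathrm{d}t$ need no longer be comparable to $A_{j,y}(x)$, and even the convergence of the representation integral is delicate. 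Establishing that $b_{j,y}(x)-1$ and the tail integral remain of order $A_{j,y}(x)$ in this regime requires a finer, dedicated analysis of the $\Pi$-class asymptotics of $\ln\ell_{j,y}$, which is precisely why the $\rho_{j,y}=0$ case is singled out and its proof deferred to Section \ref{sectionProofAUX}.
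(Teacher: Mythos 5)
For the regime $\rho_{j,y}<0$ your sketch is correct and coincides in substance with what the paper does: the paper disposes of this case by invoking the conditional version of Lemma~1 in \cite{HuaJoe2011}, whose proof is exactly the Karamata-representation argument you describe, the point being that for $A_{j,y}\in\mathrm{RV}_{\rho_{j,y}}$ with $\rho_{j,y}<0$ the function $t\mapsto A_{j,y}(t)/t$ is integrable at infinity and $\int_x^{+\infty}A_{j,y}(t)\,t^{-1}\,\mathrm{d}t=O(A_{j,y}(x))$ by Karamata's theorem.

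The genuine gap is the case $\rho_{j,y}=0$, which is the only case the paper proves in detail, and which your proposal does not prove at all: announcing that it ``requires a finer, dedicated analysis'' and is ``deferred to Section~\ref{sectionProofAUX}'' is circular, since the deferred proof is precisely what you were asked to produce. Note also that the obstacle you flag there is not merely ``delicate'' but destroys the scaffolding of your first paragraph: when $A_{j,y}$ is slowly varying, $t\mapsto A_{j,y}(t)/t$ (hence $t\mapsto (b_{j,y}(t)-1)/t$) need not be integrable at infinity --- take $A_{j,y}(t)=1/\ln t$ --- so the splitting $\int_{x_0}^x=\int_{x_0}^{+\infty}-\int_x^{+\infty}$ is not available and there is no convergent part to absorb into $\kappa_j(y)$. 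The paper's argument for $\rho_{j,y}=0$ avoids tail integrals of $A_{j,y}$ at this stage altogether: it passes to the canonical $\Pi$-class auxiliary function $\tilde a_{j,y}(t)=\ell_{j,y}(t)-t^{-1}\int_0^t\ell_{j,y}(s)\,\mathrm{d}s$, equivalently $A_{j,y}(t)=1-\int_0^1\ell_{j,y}(tu)/\ell_{j,y}(t)\,\mathrm{d}u$ (Remark~\ref{rmk:aux}), rewrites \eqref{eq:b} as $b_{j,y}^{-1}(x)=\int_{x_0/x}^{1}\ell_{j,y}(ux)/\ell_{j,y}(x)\,\mathrm{d}u$, and then evaluates $A_{j,y}(x)^{-1}\bigl[\,b_{j,y}^{-1}(x)-1+x_0/x\,\bigr]$ by dominated convergence, the domination of the integrand by $-\ln u+\eps u^{-\delta}$ coming from the uniform Drees-type inequality \eqref{eq:bound_pi} combined with Potter bounds. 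This yields the precise expansion $b_{j,y}(x)=1+A_{j,y}(x)+o(A_{j,y}(x))$ with the explicit constant $\int_0^1\ln u\,\mathrm{d}u=-1$, after which the exponential factor in \eqref{eq:ell_karamata} is expanded by a further application of the same device. None of these ingredients --- the canonical auxiliary function, the Drees/Potter uniform bounds, the dominated-convergence identification of the limiting constant --- appears in your proposal, so the $\rho_{j,y}=0$ half of the lemma remains unproven.
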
 
Once the slowly varying part of the tail quantile function is determined, we may derive, under the second order regular variation property (see Assumption \ref{hyp:model2}), the speed rate in the consistency of the tail ratios. 

\begin{lem}\label{lem:speed_beta}
Let  $\bm \Theta= (\eta,\beta_2,\ldots,\beta_d)>0$, $y \in E$,   $\al_n=1+o(1)$, $ h_n \ll 1 $ deterministic such that the intermediate regime $n\overline{\al}_n\psi_y(h_n)\gg 1$ holds true.    For each marginal $2\le k \le d$, let $\delta_{k,n,y}$  be a scaling sequence  such that
\begin{align}\label{eq:scaling2}
	&1\ll \delta_{k,n,y} \ll   \min_{s\in \{0,1,k\}}  \tilde \delta_{s,n,y},\hspace{0.2cm} \text{where $\tilde \delta_{s,n,y}$ is defined in Equation \eqref{eq:scaling1}}.
 \end{align}  Then, as $n\to +\infty$, under Assumptions  \ref{hyp:model1}, \ref{hyp:model2}, \ref{hyp:kernel}, \ref{hyp:regime}, \ref{hyp:oscillation}, \ref{hyp:regime2},
	\begin{align}\label{eq:rate_c}\delta_{k,n,y} \zp[b]{ c_{k,y}-\hat c_{k,y}} &\xrightarrow[n\to+\infty]{\p} 0.
	\end{align}

Furthermore,  it holds that, {for $1 \le k \le d$,}
	\begin{align*}
	\delta_{k,n,y} \zp[b]{\Delta^{(2)}_{k,n,y}(\bm \Theta)}&\xrightarrow[n\to+\infty]{\p} 0,\quad 	\delta_{k,n,y} \hspace{-0.25cm}\sup_{\bm \Theta\in \prod_{r=1}^d [\eps_r,M_r]}\zp[b]{\Delta^{(2)}_{k,n,y}(\bm \Theta)}\xrightarrow[n\to+\infty]{\p} 0,
	\end{align*}
 for $0<\eps_r<M_r<+\infty$, $1\le r \le d$. 
\end{lem}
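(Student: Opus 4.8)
The plan is to compare the estimator $\hat c_{k,y}$ not with the limit $c_{k,y}$ directly, but with the \emph{oracle} finite-level ratio
\[
\tilde c_{k,n,y} := \left( \frac{q_{k,y}(\al_n)}{q_{1,y}(\al_n)} \right)^{1/\gamma_y},
\]
formed from the true marginal quantiles and the true tail index. Writing $c_{k,y}-\hat c_{k,y} = (c_{k,y}-\tilde c_{k,n,y}) + (\tilde c_{k,n,y}-\hat c_{k,y})$, the first difference is a deterministic \emph{bias} governed by the second order regular variation of the margins, while the second is a purely statistical \emph{estimation} term. I would show that $\delta_{k,n,y}$ times each of them tends to $0$; the constraint \eqref{eq:scaling2}, namely $1\ll \delta_{k,n,y}\ll \min_{s\in\{0,1,k\}}\tilde\delta_{s,n,y}$ with $\tilde\delta_{s,n,y}$ as in \eqref{eq:scaling1}, is tailored so that $s=0$ kills the estimation term while $s=1,k$ kill the bias.

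For the bias the key input is Lemma \ref{lem:slowly_asymptotic_rho}. Writing the tail quantile functions as $U_{j,y}(t)=t^{\gamma_y}\ell_{j,y}(t)$, the lemma gives $\ell_{j,y}(t)=\kappa_j(y)(1+O(A_{j,y}(t)))$ as $t\to+\infty$, so that with $t=\overline{\al}_n^{-1}$ and $q_{j,y}(\al_n)=U_{j,y}(\overline{\al}_n^{-1})$,
\[
\frac{q_{k,y}(\al_n)}{q_{1,y}(\al_n)} = \frac{\ell_{k,y}(\overline{\al}_n^{-1})}{\ell_{1,y}(\overline{\al}_n^{-1})} = \frac{\kappa_k(y)}{\kappa_1(y)}\Big(1+O\big(A_{k,y}(\overline{\al}_n^{-1})+A_{1,y}(\overline{\al}_n^{-1})\big)\Big).
\]
Tail equivalence (Condition \ref{hyp:model2}) forces $\kappa_k(y)/\kappa_1(y)=c_{k,y}^{\gamma_y}$, whence $\tilde c_{k,n,y}=c_{k,y}(1+O(A_{k,y}(\overline{\al}_n^{-1})+A_{1,y}(\overline{\al}_n^{-1})))$. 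Since $\tilde\delta_{j,n,y}=A_{j,y}^{-1}(\overline{\al}_n^{-1})$, the bias is $O(\tilde\delta_{k,n,y}^{-1}+\tilde\delta_{1,n,y}^{-1})$ and is annihilated by $\delta_{k,n,y}\ll\min(\tilde\delta_{1,n,y},\tilde\delta_{k,n,y})$. This step is the main obstacle: it is where the second order indices of both margins $1$ and $k$ enter simultaneously, and it requires Lemma \ref{lem:slowly_asymptotic_rho} in full strength (both $\rho_{j,y}<0$ and $\rho_{j,y}=0$) to secure a clean $O(A_{j,y})$ control uniformly across the two margins.

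For the estimation term I would log-linearize. Setting $a_j:=\log(\hat q_{j,n,y}(\al_n)/q_{j,y}(\al_n))$ and $\Delta\gamma:=\hat\gamma_{n,y}-\gamma_y$, convergence \eqref{eq:quantile} gives $a_k,a_1=O_\p(\tilde\delta_{0,n,y}^{-1})$ and \eqref{eq:gamma} gives $\Delta\gamma=O_\p(\tilde\delta_{0,n,y}^{-1})$ (the nonzero Gaussian mean only contributes at this same scale, a point where Condition \ref{hyp:regime2} is used). With $D:=\log(q_{k,y}(\al_n)/q_{1,y}(\al_n))=\gamma_y\log c_{k,y}+o(1)=O(1)$, a direct computation yields
\[
\log\hat c_{k,y}-\log\tilde c_{k,n,y} = \frac{\gamma_y(a_k-a_1)-D\,\Delta\gamma}{\gamma_y(\gamma_y+\Delta\gamma)} = O_\p\big(\tilde\delta_{0,n,y}^{-1}\big),
\]
the consistency of $\hat\gamma_{n,y}$ keeping the denominator bounded away from $0$. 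Exponentiating (the $\Delta$-method with the smooth map $\exp$, and $\tilde c_{k,n,y}\to c_{k,y}\in(0,\infty)$) gives $\abs{\tilde c_{k,n,y}-\hat c_{k,y}}=O_\p(\tilde\delta_{0,n,y}^{-1})=o_\p(\delta_{k,n,y}^{-1})$ since $\delta_{k,n,y}\ll\tilde\delta_{0,n,y}$. Combining the two parts establishes \eqref{eq:rate_c}.

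Finally, for $\Delta^{(2)}_{k,n,y}(\bm\Theta)$ I would split the bracket as
\[
\frac{\beta_k^{1/\gamma_y}}{c_{k,y}}-\frac{\beta_k^{1/\hat\gamma_{n,y}}}{\hat c_{k,n,y}} = \beta_k^{1/\gamma_y}\frac{\hat c_{k,n,y}-c_{k,y}}{c_{k,y}\hat c_{k,n,y}} + \frac{\beta_k^{1/\gamma_y}-\beta_k^{1/\hat\gamma_{n,y}}}{\hat c_{k,n,y}}.
\]
On the hyperrectangle $\prod_r[\eps_r,M_r]$ the factors $\beta_k^{1/\gamma_y}$, $\log\beta_k$ and the prefactor $\eta(1+\sum_{j\neq k}\beta_j/\beta_k)$ are bounded uniformly, and $\hat c_{k,n,y}$ is bounded away from $0$ in probability by \eqref{eq:rate_c}. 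The first summand is $O_\p(\abs{\hat c_{k,n,y}-c_{k,y}})=o_\p(\delta_{k,n,y}^{-1})$ by the first part; the second, via the mean value identity $\beta_k^{1/\gamma_y}-\beta_k^{1/\hat\gamma_{n,y}}=\frac{\log\beta_k}{\gamma_y^2}\beta_k^{1/\gamma_y}\Delta\gamma\,(1+o_\p(1))$, is $O_\p(\abs{\Delta\gamma})=O_\p(\tilde\delta_{0,n,y}^{-1})=o_\p(\delta_{k,n,y}^{-1})$. As every bound is uniform over the compact hyperrectangle, both the pointwise and the supremum statements for $\delta_{k,n,y}\abs{\Delta^{(2)}_{k,n,y}(\bm\Theta)}$ follow at once.
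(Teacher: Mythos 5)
Your proof is correct and follows essentially the same route as the paper's: the stochastic error is controlled at rate $\tilde\delta_{0,n,y}$ via \eqref{eq:quantile}--\eqref{eq:gamma}, the deterministic second-order bias at rates $\tilde\delta_{1,n,y},\tilde\delta_{k,n,y}$ via Lemma \ref{lem:slowly_asymptotic_rho} applied to the slowly varying ratio $\ell_{k,y}(\overline{\al}_n^{-1})/\ell_{1,y}(\overline{\al}_n^{-1})$ (with the identification $\kappa_k(y)/\kappa_1(y)=c_{k,y}^{\gamma_y}$), and uniformity over the hyperrectangle via boundedness of $\log\beta_k$ and of the prefactor $\eta(1+\sum_{j\neq k}\beta_j/\beta_k)$. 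The differences are purely presentational --- you make the oracle quantity $\tilde c_{k,n,y}$ explicit and handle $\Delta^{(2)}_{k,n,y}$ by an additive splitting plus a mean value expansion, whereas the paper Taylor-expands the exponential ratio \eqref{eq:quantity} and discusses the cases $\beta_k=1$ and $\beta_k\neq 1$ separately --- but the key lemmas, the bias/estimation decomposition, and the rate bookkeeping coincide.
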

As defined in \eqref{eq:scaling2}, the auxiliary marginal scaling sequences $\delta_{k,n,y}$ may be combined to recover the global rate $\delta_{n,y}$ defined in \eqref{eq:scaling2.5} by considering the minimum over $\tilde \delta_{-1,n,y}$ and over every $\delta_{k,n,y}$ for $k\ge 2$.  The proof of  Lemma \ref{lem:speed_beta} is postponed to Section \ref{sectionProofAUX}. Next, we consider the integral of stable tail dependence functions and prove the following convergences.
\begin{Prop}\label{lem:speed_integral}  Let  $\delta_{n,y}$ as in \eqref{eq:scaling2.5}. Let $y\in E$, $1\le j\neq k\le d$, $\bm \Theta=(\eta,\beta_2,\ldots,\beta_d)\in (0,+\infty)^d$ and $\beta_1=1$. If   Assumptions   \ref{hyp:model1}, \ref{hyp:model2}, \eqref{hyp:segers}, \eqref{hyp:rate_theoretical} and Assumptions of classes I, II, III in Section \ref{AssumptionsMainTheorem} are satisfied, then 
 $$\delta_{n,y}\zp[b]{ \Delta^{(3)}_{j,k,n,y}(\bm \Theta)}\xrightarrow[n\to +\infty]{\p}0,  $$ and for any $0<\eps_r<M_r<+\infty$, $1\le r\le d$,
	$$\delta_{n,y} \sup_{\bm \Theta \in \prod_{r=1}^{d}[\eps_r,M_r]}\zp[b]{ \Delta^{(3)}_{j,k,n,y}(\bm \Theta)}\xrightarrow[n\to +\infty]{\p}0.  $$
\end{Prop}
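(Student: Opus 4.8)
The plan is to reduce the claim, via the finiteness of the sum over $j\neq k$ in \eqref{eq:delta3} and the triangle inequality, to controlling for each fixed ordered pair $(j,k)$ the single integral difference
\[
I_{j,k,n,y}(\bm\Theta):=\int_{\beta_j/\beta_k}^{+\infty}\Big[\hat\lam_{j,k,n,y}\big(\tfrac{\hat c_{j,n,y}}{\hat c_{k,n,y}}t^{-1/\hat\gamma_{n,y}},1\big)-\lam_{j,k,y}\big(\tfrac{c_{j,y}}{c_{k,y}}t^{-1/\gamma_y},1\big)\Big]\diff t ,
\]
and showing $\delta_{n,y}\,\abs{I_{j,k,n,y}(\bm\Theta)}\xrightarrow{\p}0$, together with its supremum over $\bm\Theta\in\prod_r[\eps_r,M_r]$. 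The guiding device is the change of variables $u=(c_{j,y}/c_{k,y})\,t^{-1/\gamma_y}$ (and its empirical analogue), which turns each integral into $\gamma_y a^{\gamma_y}\int_0^{b}\lam_{j,k,y}(u,1)\,u^{-\gamma_y-1}\diff u$ with $a=c_{j,y}/c_{k,y}$ and $b=a(\beta_k/\beta_j)^{1/\gamma_y}$; this moves all dependence on the random tail index out of the infinite range and into a prefactor, an upper endpoint, and the weight $u^{-\gamma_y-1}$, whose integrability against $\lam_{j,k,y}(u,1)=O(u^{\eta_{j,k,y}})$ near $0$ is exactly ensured by $\gamma_y<\eta_y$ in Assumption \ref{hyp:integrability}.

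First I would split $I_{j,k,n,y}$ into a stochastic part and a parameter part by inserting the pivot $\lam_{j,k,y}(\tfrac{\hat c_{j,n,y}}{\hat c_{k,n,y}}t^{-1/\hat\gamma_{n,y}},1)$, namely
\[
\hat\lam_{j,k,n,y}(\cdot,1)-\lam_{j,k,y}(\cdot,1)\quad\text{(at the estimated argument)}
\]
plus
\[
\lam_{j,k,y}\big(\tfrac{\hat c_{j,n,y}}{\hat c_{k,n,y}}t^{-1/\hat\gamma_{n,y}},1\big)-\lam_{j,k,y}\big(\tfrac{c_{j,y}}{c_{k,y}}t^{-1/\gamma_y},1\big).
\]
Each part I would further cut at a large threshold $T$ into a compact piece $[\beta_j/\beta_k,T]$ and a tail piece $[T,+\infty)$. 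On the tail, the integrability bounds of Assumption \ref{hyp:integrability} applied to both $\lam_{j,k,y}(t,1)=O(t^{\eta_{j,k,y}})$ and $\hat\lam_{j,k,n,y}(t,1)=O_{\p}(t^{\eta_{j,k,y}})$, combined with $\gamma_y<\eta_y$ (and $\hat\gamma_{n,y}<\eta_y$ with probability tending to one, by consistency), render both tail contributions $O_{\p}(T^{\,1-\eta_y/\gamma_y})$, hence uniformly small as $T\to+\infty$; here the lower endpoints $\beta_j/\beta_k$ stay in a fixed compact subinterval of $(0,+\infty)$ because $\bm\Theta$ ranges over $\prod_r[\eps_r,M_r]$, which is what yields uniformity. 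On the compact piece, the stochastic part is handled by the uniform convergence rate of the empirical tail dependence function from Lemma \ref{lem:lam_conv} (itself built on the empirical-copula fluctuations \eqref{eq:irene} and the first-order bias \eqref{hyp:rate_theoretical}), whereas the parameter part is handled by the local Lipschitz/continuity control of the argument map $t\mapsto\tfrac{\hat c_{j,n,y}}{\hat c_{k,n,y}}t^{-1/\hat\gamma_{n,y}}$ against its limit, established in Lemma \ref{lem:s_n-t} and fed by the consistency rates \eqref{eq:rate_c} for $\hat c_{k,n,y}$ and \eqref{eq:gamma} for $\hat\gamma_{n,y}$.

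Combining the three error sources produces a bound of order $\max(\tilde\delta_{-1,n,y}^{-1},\tilde\delta_{0,n,y}^{-1},\tilde\delta_{j,n,y}^{-1})$, so that multiplying by $\delta_{n,y}$, which by \eqref{eq:scaling2.5} is dominated by every $\tilde\delta_{\cdot,n,y}$, delivers the $\xrightarrow{\p}0$ conclusion; the uniform statement follows since each bound above was obtained uniformly in $\bm\Theta$ over the box. The step I expect to be the main obstacle is the parameter part near $+\infty$: the random exponent $1/\hat\gamma_{n,y}$ sits inside $t^{-1/\hat\gamma_{n,y}}$ over an unbounded range, so a naive Lipschitz bound in the original variable $t$ degrades in the tail (the factor $t^{-(1/\hat\gamma_{n,y}-1/\gamma_y)}$ need not remain bounded). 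The change of variables is precisely the mechanism that circumvents this, but it must be carried out with care so that the discrepancy between the true and estimated endpoints $b$ and between the weights $u^{-\gamma_y-1}$ and $u^{-\hat\gamma_{n,y}-1}$ is absorbed using only local boundedness of $\lam_{j,k,y}(u,1)\,u^{-\gamma_y-1}$ on compacta together with the integrability margin $\eta_y-\gamma_y>0$.
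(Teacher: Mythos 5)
Your decomposition is exactly the paper's: inserting the pivot $\lam_{j,k,y}$ evaluated at the estimated argument splits $\Delta^{(3)}_{j,k,n,y}$ into the paper's stochastic part $\Delta^{(3,2)}_{j,k,n,y}$ and parameter part $\Delta^{(3,1)}_{j,k,n,y}$, and the change of variables $u=(c_{j,y}/c_{k,y})t^{-1/\gamma_y}$, Lemma \ref{lem:lam_conv}, Lemma \ref{lem:s_n-t} and the rates \eqref{eq:gamma}, \eqref{eq:rate_c} are the same ingredients the paper uses. The genuine gap is in the rate bookkeeping of your cut at $T$. With $T$ fixed, the tail contribution $O_{\p}(T^{1-\eta_y/\gamma_y})$ does \emph{not} vanish as $n\to+\infty$, so multiplying by $\delta_{n,y}\gg 1$ destroys it; and if instead you take $T=T_n\to+\infty$, the compact-piece bounds inflate with $T_n$, since they are proportional to the length of the integration interval (the stochastic part costs roughly $T_n$ times the sup-error of Lemma \ref{lem:lam_conv}, the parameter part roughly $T_n\ln T_n$ times the rates of $\hat\gamma_{n,y},\hat c_{\cdot,n,y}$). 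Hence the combined bound is of order $T_n^{1-\eta_y/\gamma_y}+T_n\ln T_n\cdot\delta^{-1}$ with $\delta:=\min_{-1\le j\le d}\tilde\delta_{j,n,y}$, and optimizing over $T_n$ yields at best an order $\delta^{-(\eta_y-\gamma_y)/\eta_y}$ up to logarithms. This is strictly worse than $O_{\p}(\delta^{-1})$, whereas the proposition must hold for \emph{every} $\delta_{n,y}$ satisfying \eqref{eq:scaling2.5}, which is equivalent to $\Delta^{(3)}_{j,k,n,y}=O_{\p}(\delta^{-1})$. So your claim that ``combining the three error sources produces a bound of order $\max(\tilde\delta_{-1,n,y}^{-1},\tilde\delta_{0,n,y}^{-1},\tilde\delta_{j,n,y}^{-1})$'' does not follow from the truncation scheme you describe.

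The paper's proof avoids truncation entirely: it performs the change of variables \emph{first}, on both terms of each part, so that $\Delta^{(3,1)}_{j,k,n,y}$ and $\Delta^{(3,2)}_{j,k,n,y}$ become integrals over the compact intervals $[0,M_y(\bm\Theta)]$ and $[0,\hat M_{n,y}(\bm\Theta)]$, with $M_y(\bm\Theta)=\frac{c_{j,y}}{c_{k,y}}(\beta_j/\beta_k)^{-1/\gamma_y}$, its empirical analogue, and the multiplicative prefactors all bounded in probability uniformly over $\bm\Theta\in\prod_{r}[\eps_r,M_r]$. On these compacta the $1$-Lipschitz property of $\lam_{j,k,y}$ together with the uniform rate of Lemma \ref{lem:s_n-t} for $\hat s_{j,k,n,y}(t)-t$ disposes of the parameter part, and Lemma \ref{lem:lam_conv} disposes of the stochastic part, with no loss of rate. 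Your closing paragraph correctly senses that the change of variables is the right mechanism and that the Jacobian weight $u^{-\gamma_y-1}$ near $u=0$ is the delicate point (a legitimate concern: the transformed integrals displayed in the paper carry no such weight, and absorbing it at the full rate using the margin $\eta_y-\gamma_y>0$ requires a careful argument), but your proposal defers exactly this step rather than carrying it out. Since the fixed-$T$ scheme you do detail cannot substitute for it, the proof is incomplete as it stands.
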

The proof of Proposition \ref{lem:speed_integral} can be found in Section \ref{sectionProofAUX}.

  
\section{Auxiliary proofs and results}\label{annexeA}
We gather in this section the proofs of the results needed for Proposition \ref{prop:main} as well as further auxiliary results required for the intermediate steps of the proof.  
\subsection{Proofs of lemmas of  Section \ref{sectionProof}}\label{sectionProofAUX}

We introduce a particular proxy version to the second order regular variation property to be connected to slowly varying functions when the second order tail index is zero. In this context, we constructively specify the auxiliary function and shall make use of an uniform Drees-type inequality. This is of importance in order to address in depth the Karamata representation applied to the slowly varying part of the conditional marginal quantile tail function.

\begin{definition}[Class $\Pi$][Definition B.2.4 in \cite{extreme_value_theory_de_haan_ferreira}]\label{def:pi}
	A measurable function $f:\R^+\to \R$ belongs to the class $\Pi(a)$ if there exists an auxiliary function $a:\R^+ \to \R^+$ such that for $x>0$,\begin{align}\label{eq:pi_def}\lim\limits_{t\to +\infty}\f{f(tx)-f(t)}{a(t)}&=\ln(x).
	\end{align}
\end{definition}

\begin{rmk}\label{rmk:aux}
The auxiliary function is slowly varying, namely $a\in {\rm{RV}}_0(+\infty)$, by Theorem B.2.7 in \cite{extreme_value_theory_de_haan_ferreira}. By Theorem B.2.12 in \cite{extreme_value_theory_de_haan_ferreira}, $f\in \Pi(a)$ is equivalent to $f\in \Pi(\tilde{a})$ with $\tilde{a}(t) = f(t)-\ff{t}\int_0^tf(s)\mathrm{d}s$. Hence, if $f\in \Pi(a)$ for some auxiliary function $a$, we can always choose instead the alternative auxiliary function $\tilde{a}$. More can be said, if in addition $x^{{\gamma}}f(x) \in 2{\rm{RV}}_{{\gamma},0}$, $\gamma>0$. In this case, the auxiliary function $A$ in Definition \ref{def:soc1} can be chosen as $A(t)=\tilde{A}(t):=\f{\tilde{a}(t)}{f(t)}$. According to Proposition B.2.17 in \cite{extreme_value_theory_de_haan_ferreira}, if $f\in \Pi(a)$, then for any $\eps,\delta>0$, there exists $x_0=x_0(\eps,\delta)\ge 0$, such that for any $t$ and $x$ with $tx\ge x_0$, the following uniform bound holds
\begin{align}\label{eq:bound_pi} \zp[b]{\f{\ell(tx)-\ell(x)}{\tilde{a}(x)}-\ln(t)}&\le \eps \left(\max \left( t^{\delta},t^{-\delta}\right)\right) ,
\end{align} with the relation $\tilde{A} \ell =\tilde{a}$.
\end{rmk}

\begin{proof}[Proof of Lemma \ref{lem:slowly_asymptotic_rho}]
Let $1\le j \le d$ and $y\in E$. The case $\rho_{j,y} < 0$ is a straightforward generalization to the conditional setting of Lemma 1 in \cite{HuaJoe2011}. We may now assume the second order tail index of $U_{j,y}$ to be zero, i.e.,  $\rho_{j,y}=0.$

According to the definition of the second order regular variation, there exists auxiliary functions $A_{j,y}:=A_{U_{j,y}}\in {\rm{RV}}_0(+\infty)$ for $1\le j \le d$ with ultimately constant sign and $\lim\limits_{t\to +\infty}A_{j,y}(t)=0$ such that
	$$ 	\lim\limits_{t\to +\infty}	\ff{A_{j,y}(t) \ell_{j,y}(t)} \left( \ell_{j,y}(tx)-\ell_{j,y}(t) \right)=\ln(x),\quad  x>0.$$
 
Since $\ell_{j,y}$ is non-negative, we immediately deduce that $$ \ell_{j,y} \in \Pi(a_{j,y}),\quad a_{j,y}(t):=  {\rm{sign}} \left\{  A_{j,y}(t) \right\} A_{j,y}(t)  \ell_{j,y}(t) \in {\rm{RV}}_0(+\infty),\quad a_{j,y}\ge 0.$$
	
By Remark \ref{rmk:aux}, the definition of the second order regular variation and the relation ${U_{j,y}}(x)=x^{\gamma_y}\ell_{j,y}(x)$, we can modify the auxiliary functions so that $$ \ell_{j,y}\left( \cdot \right)  \in \Pi(\tilde{a}_{j,y}),\quad \tilde{a}_{j,y}(t):= \ell_{j,y}(t)-\ff{t}\int_0^t\ell_{j,y}\left( s \right)\mathrm{d}s,$$
and ${U_{j,y}}\left( \cdot \right)\in 2 {\rm{RV}}_{-1/{\gamma_y},0}(+\infty)$, with auxiliary function $$ A_{j,y}(t) = \f{\tilde{a}_{j,y}(t)}{\ell_{j,y}(t)}= 1 - \int_0^1 \f{\ell_{j,y}(tu)}{\ell_{j,y}(t)}\mathrm{d}u\in {\rm{RV}}_{0}(+\infty).$$
	
Fix $\eps>0$ and $\delta \in (0,1)$. Let $x_0(\eps,\delta)>0$ such that \eqref{eq:bound_pi} holds. From Potter's Theorem B.1.9 in \cite{extreme_value_theory_de_haan_ferreira} with $\delta_1=1$ and $\delta_2=1/{2}$, let $x_0(\delta_1,\delta_2)>0$ such that Potter's bounds hold as well. We finally set $x_0 := \max\left( x_0(\eps,\delta), x_0(\delta_1,\delta_2)\right)$. In the rest of the proof, we consider this particular choice of $x_0$ in \eqref{eq:ell_karamata} and investigate \eqref{eq:b}.
	
With the substitution $u=\f{t}{x}$ in \eqref{eq:b}, one can express $b_{j,y}^{-1}(x) = \int_{\f{x_0}{x}}^1 \f{\ell_{j,y}(ux)}{\ell_{j,y}(x)}\mathrm{d}u$. Then, with $A_{j,y}\in {\rm{RV}}_{0}(+\infty)$, $$\ff{A_{j,y}(x)}\left(b^{-1}_{j,y}(x)-1+\f{x_0}{x} \right) = \int_{0}^1\ff{A_{j,y}(x)}\left[ \f{\ell_{j,y}(ux)}{\ell_{j,y}(x)}-1\right]\bm{1}_{ [\f{x_0}{x},1] }(u)\mathrm{d}u  . $$

For $u\in (0,1)$ and $x$ such that $ux\ge x_0$, the bound \eqref{eq:bound_pi} yields  $$\zp[b]{\ff{A_{j,y}(x)}\left[ \f{\ell_{j,y}(ux)}{\ell_{j,y}(x)}-1\right]\bm{1}_{ [\f{x_0}{x},1] }(u)}\le -\ln(u) +\eps u^{-\delta}:=f_{\eps,\delta}(u).  $$
	
The function $u\mapsto f_{\eps,\delta}(u)$ is integrable on $(0,1)$ as $-1<-\delta< 0$. Altogether with the pointwise convergence \eqref{eq:pi_def}, the dominated convergence theorem gives
$$\int_{0}^1\ff{A_{j,y}(x)}\left[ \f{\ell_{j,y}(ux)}{\ell_{j,y}(x)}-1\right]\bm{1}_{ [\f{x_0}{x},1] }(u)\mathrm{d}u  \xrightarrow[x\to +\infty]{}\kappa_{0} :=\int_0^1 \ln(u)\mathrm{d}u = -1 < +\infty  .$$
	
The previous convergence holds with limiting constant $-\kappa_{0}$ instead of $\kappa_{0}$ by accordingly adjusting the sign of the auxiliary function $A_{j,y}$. Opting for this convention, we subsequently set $\kappa_{0}=1$. One may then write $$b^{-1}_{j,y}(x)= 1+ A_{j,y}(x) -\f{x_0}{x}+ o\left( A_{j,y}(x)\right)  .   $$
	
Since $x\mapsto \ff{x}\in {\rm{RV}}_{-1}(+\infty)$ and $A_{j,y}\in {\rm{RV}}_{0}(+\infty)$, it follows that $b_{j,y}(x) = 1+ A_{j,y}(x) + o\left( A_{j,y}(x)\right) ,\quad x\to +\infty .$
	
Going back to \eqref{eq:ell_karamata},\begin{align*}\ell_{j,y}(x)&=\kappa_j(y)  \left(  1+ A_{j,y}(x) + o\left(  A_{j,y}(x)\right) \right)  \exp \left(\int_{x_0}^x \frac{   A_{j,y}(t) + o\left( A_{j,y}(x)\right)  }{t}\mathrm{d}t \right).
	\end{align*}
	
Repeating the same argument of dominated convergence but in the first order regular variation context for the auxiliary function $A_{j,y}\in {\rm{RV}}_{0}(+\infty)$ with Potter's Theorem B.1.9 in \cite{extreme_value_theory_de_haan_ferreira}, $$ \int_{x_0}^x \frac{  A_{j,y}(t) }{t}\mathrm{d}t =  A_{j,y}(x)+o\left( A_{j,y}(x)\right)   ,\quad x\to +\infty.$$
	
Therefore, a Taylor expansion as $x\to +\infty$ yields the asymptotic representation \begin{align*}\ell_{j,y}(x)&=\kappa_j(y)  \left(1+ O\left(   A_{j,y}(x)\right)   \right),\quad A_{j,y} \in {\rm{RV}}_0(+\infty), \quad \kappa_j(y)\in (0,+\infty).
	\end{align*}
\end{proof}

\begin{proof}[Proof of Lemma \ref{lem:speed_beta}]
	Let $y\in E$ and $2\le k \le d$. First, observe that straight consequences of \eqref{eq:quantile} as well as $\Delta$-method  applied to \eqref{eq:gamma} are
	\begin{align}\label{eq:Hill_conv3}
		\frac{\hat {q}_{k,n,y}(\al_n)}{ {q}_{k,y}(\al_n)}=1+ O_{\p} \left(   \tilde\delta^{-1}_{0,n,y}\right)&,\quad	\hat \gamma^{-1}_{n,y}-\gamma^{-1}_y=O_{\p} \left( \tilde\delta^{-1}_{0,n,y} \right) .
	\end{align}
	
	One may write
	\begin{align*}\ln \left( \frac{ \hat{q}_{k,n,y}({\al}_n)}{ \hat{q}_{1,n,y}({\al}_n)} \right)&=\ln \left( \frac{ \hat{q}_{k,n,y}({\al}_n)}{ q_{k,y}(\al_n) } \right)+\ln \left( \frac{ q_{k,y}({\al}_n)}{q_{1,y}({\al}_n)} \right)+\ln \left( \frac{ q_{1,y}({\al}_n)}{ \hat{q}_{1,n,y}({\al}_n)} \right).
	\end{align*}
	
	In view of \eqref{eq:Hill_conv3} and after Taylor expanding, this becomes
	\begin{align}\label{eq:aux1}\ln \left( \frac{ \hat{q}_{k,n,y}(\al_n)}{ \hat{q}_{1,n,y}(\al_n)} \right)&=  \ln \left( \frac{ q_{k,y}(\al_n)}{q_{1,y}(\al_n)} \right)+O_{\p} \left(\tilde\delta^{-1}_{0,n,y} \right) .
	\end{align}

	We investigate the logarithm term in the RHS of \eqref{eq:aux1} and precise its asymptotic behaviour. Regarding the quantiles, it is clear that, for $\al_n \ll 1$ and each $1\le j\le d$, we have $q_{j,y}(\al_n) \gg 1$ and $q_{j,y}\in {\rm{RV}}_{\gamma_y}(1)$. As a functional version of Lemma 3.2 in \cite{MaumeDeschampsRulliereSaidExtremes} which may be proved along the same steps, we have that, if $t=O(s)$, for any $1\le i,j\le d$,
\begin{align*}
	\f{\overline{F}_{i,y}(s)}{\overline{F}_{j,y}(t)} &\sim \f{c_{i,y}}{c_{j,y}}  \left(\f{s}{t}\right)^{-1/\gamma_y},\quad t\to +\infty.
\end{align*} It follows that  \begin{align*}
		1=	\f{\overline{F}_{k,y}(q_{k,y}(\al_n))}{\overline{F}_{1,y}(q_{1,y}(\al_n))} &= c_{k,y} \left(\f{q_{1,y}(\al_n)}{q_{k,y}(\al_n)}\right)^{1/\gamma_y}+o(1),\quad n\to +\infty.
	\end{align*}
	
	Hence,$$  c_{k,y}=\left( \f{q_{k,y}(\al_n)}{q_{1,y}(\al_n)}\right)^{1/\gamma_y}+o(1),\quad \text{i.e.}\quad \f{q_{k,y}(\al_n)}{q_{1,y}(\al_n)}= c_{k,y}^{\gamma_y}+o(1).$$
	
	On the other hand, one may write the ratio in terms of slowly varying functions \begin{align*}
		\f{q_{k,y}(\al_n)}{q_{1,y}(\al_n)}&= \f{U_{k,y}(\overline{\al}_n^{-1})}{U_{1,y}(\overline{\al}_n^{-1})}
		= \f{\ell_{k,y}(\overline{\al}_n^{-1})}{\ell_{1,y}(\overline{\al}_n^{-1})} .
	\end{align*}
	
	By Lemma \ref{lem:slowly_asymptotic_rho}, for $1\le j\neq k \le d$ and as $x\to +\infty$, the following asymptotic holds after a Taylor expansion \begin{align*}\f{\ell_{j,y}(x)}{\ell_{k,y}(x)}&=\f{\kappa_j(y)}{\kappa_k(y)} \left[1+O\left(\max\left( A_{j,y}(x), A_{k,y}(x)\right) \right) \right].
	\end{align*}
	
	We deduce the tail ratio representation $c_{k,y} =\left(  {\kappa_k(y)}/{\kappa_1(y)} \right)^{1/\gamma_y} $ and that for $1\le k \le d$,
	\begin{align}\label{eq:aux2}
		0&\le \limsup\limits_{n\to +\infty} \min( \tilde\delta_{1,n,y}{,}\, \tilde\delta_{k,n,y}) \left(\f{ q_{k,y}(\al_n)}{q_{1,y}(\al_n)}-c_{k,y}^{\gamma_y}  \right) <+\infty.
	\end{align}
	
	We can state \eqref{eq:aux2} in an alternative way $ c_{k,y}^{\gamma_y}  \f{ q_{1,y}(\al_n)}{q_{k,y}(\al_n)} =1+  O\left( \max\left(\tilde\delta^{-1}_{1,n,y},\tilde\delta^{-1}_{k,n,y}\right) \right) ,\quad n\to +\infty.$ 	Combining \eqref{eq:gamma}, \eqref{eq:Hill_conv3}, \eqref{eq:aux1} and \eqref{eq:aux2} with the definition of $\hat c_{k,y}$ in \eqref{eq:hatc_j}, it yields \begin{align*}\ln(\hat{c}_{j,n,y})&=\ff{\hat \gamma_{n,y}}\ln \left( \frac{ \hat{q}_{k,n,y}(\al_n)}{ \hat{q}_{1,n,y}(\al_n)} \right)=  \ff{\hat \gamma_{n,y}}\ln \left( \frac{ q_{k,y}(\al_n)}{q_{1,y}(\al_n)} \right)+O_{\p} \left(\tilde\delta^{-1}_{0,n,y} \right)
		= \f{\gamma_y }{\hat \gamma_{n,y}}   \ln  c_{k,y} +o_\p\left(\max\left(\tilde\delta^{-1}_{1,n,y},\tilde\delta^{-1}_{k,n,y} \right)\right)  +O_{\p} \left(\tilde\delta^{-1}_{0,n,y} \right) .
	\end{align*}
	Limit in \eqref{eq:rate_c} is proved after an application of the $\Delta$-method.  We now consider the quantity
 \begin{align}\label{eq:quantity}\frac{ \beta^{1/\hat{\gamma}_{n,y}}_k}{\hat{c}_{k,n,y}} \left(\frac{\beta^{1/ \gamma_y}_k}{ c_{k,y}} \right)^{-1}&= \exp \left[ - \hat{\gamma}^{-1}_{n,y}  \ln \left( \frac{ \hat{q}_{k,n,y}({\al}_n)}{ \hat{q}_{1,n,y}({\al}_n)} \right)+ \left( \hat \gamma^{-1}_{n,y}-\gamma^{-1}_y \right)   \ln \left( \beta_k \right)+\ln(c_{k,y})\right].
	\end{align}
	
	Combining again \eqref{eq:gamma}, \eqref{eq:Hill_conv3}, \eqref{eq:aux1} and \eqref{eq:aux2}, and Taylor expanding in \eqref{eq:quantity}, it yields\begin{align*}\frac{ \beta^{1/\hat{\gamma}_{n,y}}_k}{\hat{c}_{k,n,y}}-\frac{\beta^{1/ \gamma_y}_k}{ c_{k,y}} &= \f{c_{k,y}^{\gamma_y} }{\gamma_y} O_\p\left(\max \left(\tilde\delta^{-1}_{1,n,y}, \tilde\delta^{-1}_{k,n,y}\right) \right) + O_{\p} \left(\tilde\delta^{-1}_{0,n,y} \right)  .
	\end{align*}
	
	The previous asymptotic equality is valid for any $1\le k \le d$ . Moreover, the asymptotic holds uniformly over $\beta_k \in [\eps_k,M_k]$ with $0<\eps_k<M_k<+\infty$. Indeed, in view of the term $\ln (\beta_k)$ in \eqref{eq:quantity}, we discriminate two cases: when $0<\beta_k\neq 1$ and when $\beta_k=1$.
	
	In the former case, it is clear that $\zp[b]{\ln \beta_k} $ is positive and bounded by a constant depending on $\eps_k,M_k$.
	
	In the second case, the speed rate $\tilde \delta_{0,n,y}$ induced by the difference $\hat \gamma^{-1}_{n,y}-\gamma^{-1}_y$ apparently vanishes but is reintroduced by the term $ \ln ( \frac{ \hat{q}_{1,n,y}({\al}_n)}{ \hat{q}_{k,n,y}({\al}_n)} )$. Now, the term $\eta (1+\sum_{j\neq k} {\beta_j}/{\beta_k} )$ is positive and bounded when $\bm \Theta=(\eta,\beta_2,\ldots,\beta_d)\in \prod_{k=1}^d  [\eps_k,M_k]$ and $\beta_1=1$. Hence, the second part of  Lemma \ref{lem:speed_beta} is proved. 
\end{proof}

\begin{proof}[Proof of Proposition \ref{lem:speed_integral}]
In the following, for the sake of readability,  we drop the finite sum in \eqref{eq:delta3}. 
We start by decomposing $	\Delta^{(3)}_{j,k,n,y} $ defined in \eqref{eq:delta3} as $  	\Delta^{(3)}_{j,k,n,y}  =\Delta^{(3,1)}_{j,k,n,y}+\Delta^{(3,2)}_{j,k,n,y}$ with 
\begin{align*}
\Delta^{(3,1)}_{j,k,n,y}&=\int_{\frac{\beta_j}{\beta_k}}^{+\infty} \lam_{ j,k,y}\left(\frac{\hat c_{j,n,y}}{\hat c_{k,n,y}}t^{-1/{\hat \gamma_{n,y}}},1\right) \mathrm{d}t-\int_{\frac{\beta_j}{\beta_k}}^{+\infty}\lam_{ j,k,y}\left(\frac{c_{j,y}}{c_{k,y}}t^{-1/{\gamma_y}},1\right) \mathrm{d}t,
\end{align*}
\begin{align*}
	\Delta^{(3,2)}_{j,k,n,y}&=\int_{\frac{\beta_j}{\beta_k}}^{+\infty}\hat \lam_{ j,k,n,y}\left(\frac{\hat c_{j,n,y}}{\hat c_{k,n,y}}t^{-1/{\hat \gamma_{n,y}}},1\right) \mathrm{d}t-\int_{\frac{\beta_j}{\beta_k}}^{+\infty} \lam_{ j,k,y}\left(\frac{\hat c_{j,n,y}}{\hat c_{k,n,y}}t^{-1/{\hat \gamma_{n,y}}},1\right) \mathrm{d}t,
\end{align*}
where $\hat \gamma_{n,y}$ and $\hat c_{j,n,y}$, $1\le j \le d$, are defined in \eqref{eq:Hill2} and \eqref{eq:hatc_j}. Note that $\lam_{j,k,y}(0,1)=1-L_{j,k,y}(0,1)=0$. 

In view of Assumption \ref{hyp:integrability}, the mapping $t\mapsto  t^{-(\gamma_y+1)}\lam_{ j,k,y}\left(t,1\right)$ is integrable in the proximity  of $0$ so we may consider the change of variable \begin{align*}
	\frac{c_{j,y}}{c_{k,y}}t^{-1/{\gamma_y}}& = u \iff t =  \left( \frac{c_{k,y}}{c_{j,y}}u\right)^{-\gamma_y},~   \mathrm{d}t =-\gamma_y \left( \frac{c_{k,y}}{c_{j,y}}\right)^{-\gamma_y} u^{-(\gamma_y+1)} \mathrm{d}u.
\end{align*}

It yields the alternative form
\begin{align*}
	\Delta^{(3,1)}_{j,k,n,y}&= -v_y\int_0^{M_y(\bm \Theta)} \left\{\lam_{ j,k,y}\left( \hat s_{j,k,n,y}(t),1\right)-\lam_{ j,k,y}\left(t,1\right) \right\}\mathrm{d}t,
\quad
	\Delta^{(3,2)}_{j,k,n,y}= -\hat v_{n,y}\int_0^{\hat M_{n,y}(\bm \Theta )} \left\{\hat \lam_{ j,k,n,y}\left( t,1\right)-\lam_{ j,k,y}\left(t,1\right) \right\}\mathrm{d}t,
\end{align*}
with \begin{align}\label{eq:hats_n}
\hat s_{j,k,n,y}(t)&:=\f{\hat c_{j,n,y}}{\hat c_{k,n,y}}\left(\f{c_{k,y}}{c_{j,y}}\right)^{\gamma_y/\hat\gamma_{n,y}}\hspace{-0.5cm}\cdot t^{\gamma_y/\hat\gamma_{n,y}}:=\hat \sigma_{n,y}   \cdot\, t^{\gamma_y/\hat\gamma_{n,y}},
\end{align}
$$M_{y}(\bm \Theta ):=  \frac{c_{j,y}}{c_{k,y}}\left(  \frac{\beta_j}{\beta_k}\right)^{-1/{\gamma_y}},\quad v_{y} := \gamma_y \left( \frac{c_{k,y}}{c_{j,y}}\right)^{-\gamma_y},\quad
\hat M_{n,y}(\bm \Theta ):=  \frac{\hat c_{j,n,y}}{\hat c_{k,n,y}}\left(  \frac{\beta_j}{\beta_k}\right)^{-1/{\hat \gamma_{n,y}}},\quad \hat v_{n,y} := \hat \gamma_{n,y} \left( \frac{\hat c_{k,n,y}}{\hat c_{j,n,y}}\right)^{-\hat \gamma_{n,y}}\hspace{-0.5cm}.$$

In order to prove Proposition \ref{lem:speed_integral}, it is enough to show that, for $s\in \{1,2\}$, \begin{align}\label{eq:boil2}
\delta_{n,y}\zp[b]{	\Delta^{(3,s)}_{j,k,n,y}(\bm \Theta)}\xrightarrow[n\to +\infty]{\p}0,
\end{align}
and the uniform counterparts, for $0<\eps_r<M_r<+\infty$, $1\le r\le d$,
$	\delta_{n,y} \sup_{\bm \Theta \in \prod_{p=1}^d [\eps_r,M_r]  } \zp[b]{	\Delta^{(3,s)}_{j,k,n,y}(\bm \Theta)}\xrightarrow[n\to +\infty]{\p}0.
$

From \eqref{eq:gamma} and \eqref{eq:rate_c}, for any $y\in E$, $1\le j\le d$ and $t\ge 0$, we have $
	\hat \gamma_{n,y}  \xrightarrow[n\to +\infty]{\p} \gamma_y,\quad \hat c_{j,n,y}\xrightarrow[n\to +\infty]{\p} c_{j,y}.$

In particular, the continuous mapping theorem readily implies that $\hat s_{j,k,n,y}(t)\xrightarrow[n\to +\infty]{\p} t$ and also \begin{align}\label{eq:bounded}
  \hat M_{n,y}(\bm \Theta )\xrightarrow[n\to +\infty]{\p} M_y(\bm \Theta ),\quad \hat v_{n,y}(t)\xrightarrow[n\to +\infty]{\p} v_y.
\end{align}

It is classic knowledge that the stable tail dependence function $\lam_{j,k,y}$ is $1$-Lipschitz, hence, the case $\Delta^{(1)}_{n,y}$ in \eqref{eq:boil2} follows from Lemma \ref{lem:s_n-t}. On the other hand, the case $\Delta^{(2)}_{k,n,y}$ in \eqref{eq:boil2} is a mere consequence of the uniform boundedness over $\beta := (1,\beta_2,\ldots,\beta_d)\in\prod_{j=1}^{d}[\eps_j,M_j]$ with $0<\eps_j<M_j<+\infty$, for any $j$, of the quantities in \eqref{eq:bounded} and the uniform convergence in probability of Lemma \ref{lem:lam_conv}.
\end{proof}

\subsection{Auxiliary lemmas}

In this section we introduce and prove  three  auxiliary results. Recall the scaling sequence $\delta_{k,n,y}$ for each margins $1\le k \le d$ defined in \eqref{eq:scaling2}. Firstly we may have a closer look to the previous convergence of $\hat s_{j,k,n,y}$ in \eqref{eq:hats_n}. It follows from  \eqref{eq:gamma} and \eqref{eq:rate_c} that $$ \delta_{k,n,y}  \left( \hat \sigma_{n,y}-1\right) \xrightarrow[n\to +\infty]{\p}0. $$

An application of the multivariate $\Delta$-method yields the following pointwise convergence in probability
\begin{align*}
\delta_{k,n,y}  \left( \hat s_{j,k,n,y}(t)-t \right)   &\xrightarrow[n\to +\infty]{\p} 0.
\end{align*}

Leaning on the monotonocity of the mapping $t\mapsto \hat s_{j,k,n,y}(t)$, we improve the pointwise convergence in probability $\hat s_{j,k,n,y}(t)-t=o_{\p}(1)$ to an uniform version by a probabilistic Dini-type argument.

\begin{lemma}\label{lem:dini} For any $0<T<+\infty$ and $1\le j\neq k\le d$,  it holds that 
	$  \sup_{t\in [0,T]}\zp[b]{ \hat s_{j,k,n,y}(t)-t}\xrightarrow[{n\to +\infty}]{\p} 0.$
\end{lemma}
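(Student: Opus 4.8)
The plan is to deploy a probabilistic analogue of Dini's theorem, taking advantage of the fact that on $[0,T]$ both the random map $t \mapsto \hat{s}_{j,k,n,y}(t) = \hat{\sigma}_{n,y}\,t^{\gamma_y/\hat{\gamma}_{n,y}}$ and its limit $t \mapsto t$ are nondecreasing, the exponent $\gamma_y/\hat{\gamma}_{n,y}$ and the factor $\hat{\sigma}_{n,y}$ both being positive. First I would record the pointwise control already established just above the lemma: since $\delta_{k,n,y}\bigl(\hat{s}_{j,k,n,y}(t) - t\bigr) \xrightarrow{\p} 0$ with $\delta_{k,n,y}\gg 1$, for every fixed $t \in [0,T]$ one has $\hat{s}_{j,k,n,y}(t) - t \xrightarrow{\p} 0$, while at $t = 0$ the difference vanishes identically.

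Next I would fix $\eps > 0$ and choose a finite grid $0 = t_0 < t_1 < \cdots < t_N = T$ of mesh smaller than $\eps/2$, so that $N$ depends only on $\eps$ and $T$. For $t \in [t_\ell, t_{\ell+1}]$, the monotonicity of $\hat{s}_{j,k,n,y}$ together with $t_\ell \le t \le t_{\ell+1}$ yields the two-sided bound
\[ \hat{s}_{j,k,n,y}(t_\ell) - t_{\ell+1} \;\le\; \hat{s}_{j,k,n,y}(t) - t \;\le\; \hat{s}_{j,k,n,y}(t_{\ell+1}) - t_\ell . \]
Adding and subtracting the grid values in each extreme and invoking the mesh bound $t_{\ell+1}-t_\ell < \eps/2$, this collapses, after taking the supremum over the union of the subintervals, to
\[ \sup_{t\in[0,T]} \zp[b]{\hat{s}_{j,k,n,y}(t) - t} \;\le\; \max_{0\le \ell\le N} \zp[b]{\hat{s}_{j,k,n,y}(t_\ell) - t_\ell} + \tfrac{\eps}{2}. \]

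Finally, since the right-hand side involves only the finitely many grid points, I would estimate the relevant probability by a union bound,
\[ \p\!\left( \sup_{t\in[0,T]} \zp[b]{\hat{s}_{j,k,n,y}(t) - t} > \eps \right) \;\le\; \sum_{\ell=0}^{N} \p\!\left( \zp[b]{\hat{s}_{j,k,n,y}(t_\ell) - t_\ell} > \tfrac{\eps}{2} \right) \xrightarrow[n\to+\infty]{} 0, \]
the limit following from the pointwise convergence together with the finiteness of $N$; letting $\eps \downarrow 0$ then gives the claim. The only delicate point is the sandwiching step: it is precisely the shared monotonicity of the two functions that permits replacing the continuum of values of $t$ by a fixed finite collection, thereby upgrading pointwise-in-probability convergence to the uniform-in-probability statement. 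The remaining manipulations are routine.
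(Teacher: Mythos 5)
Your proof is correct and follows essentially the same route as the paper's: both arguments discretize $[0,T]$ into a finite grid, exploit the monotonicity of $t\mapsto \hat s_{j,k,n,y}(t)$ to sandwich the deviation at an arbitrary point by deviations at grid points plus the mesh error, and then conclude by a union bound combined with the pointwise convergence in probability. The only differences are cosmetic (a generic mesh-$\eps/2$ grid versus the paper's uniform grid $\ell T/N$, and a slightly more compact way of writing the sandwich), so there is nothing to add.
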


\begin{proof}[Proof of Lemma \ref{lem:dini}]
Let $T>0$ and $\eps>0$. Let also an integer $N:=N(\eps)\ge 1$ such that $T<\eps N$. We consider a subdivision with step less than $\eps$ of the compact set $[0,T]=\cup_{\ell=0}^{m-1} [\f{\ell}{N}T,\f{\ell+1}{N}T]$. In particular, for any $t\in [0,T]$, there exists $0\le \ell_1<\ell_2\le N$ such that $\f{\ell_1}{N}T\le t\le \f{\ell_2}{N}T$. Fix $\omega\in \Omega$ for the moment and assume that for any $0\le \ell\le N$, $\zp[b]{\hat s_{j,k,n,y}\left(\f{\ell}{N}T\right)(\omega)-\f{\ell}{N}T}<\eps$. Since the mapping $x\mapsto \hat s_{j,k,n,y}(x)(\omega)$ is non-decreasing, the following inequalities hold $$\hat s_{j,k,n,y}\left(t\right)(\omega)-t\le \hat s_{j,k,n,y}\left(\f{\ell_2}{N}T\right)(\omega)-\f{\ell_1}{N}T \le  s_{j,k,n,y}\left(\f{\ell_2}{N}T\right)(\omega)-\f{\ell_2}{N}T+\eps < 2\eps ,  $$
$$\hat s_{j,k,n,y}\left(t\right)(\omega)-t\ge \hat s_{j,k,n,y}\left(\f{\ell_1}{N}T\right)(\omega)-\f{\ell_2}{N}T \ge  \hat s_{j,k,n,y}\left(\f{\ell_1}{N}T\right)(\omega)-\f{\ell_1}{N}T-\eps > -2\eps .  $$ Thus, $\sup_{t\in [0,T]} \zp[b]{\hat s_{j,k,n,y}\left(t\right)(\omega)-t}<2\eps$. As a consequence, $$\p \left(  \zp[b]{\hat s_{j,k,n,y}\left(\f{\ell}{N}T\right)-\f{\ell}{N}T}<\eps,~\forall \ell=0,\ldots, N\right)\le \p\left(\sup_{t\in [0,T]} \zp[b]{\hat s_{j,k,n,y}\left(t\right)-t}<2\eps\right). $$

Taking the complementary, using union bound and the previous  pointwise convergence in probability of $\hat s_{j,k,n,y}$, as $N,T,\eps<+\infty$ are independent of $n$,
$$\p\left(\sup_{t\in [0,T]} \zp[b]{\hat s_{j,k,n,y}\left(t\right)-t}>\eps\right)\le \sum_{\ell=0}^{N} \p \left(  \zp[b]{\hat s_{j,k,n,y}\left(\f{\ell}{N}T\right)-\f{\ell}{N}T}>\f{\eps}{2}\right)\xrightarrow[n\to +\infty]{}0. $$
\end{proof}

In the following lemma we provide  the speed rate of  convergence in Lemma \ref{lem:dini}.

\begin{lem}\label{lem:s_n-t}
Let   $\delta_{k,n,y}$ be the  scaling sequence  for each margins $1\le k \le d$ defined in \eqref{eq:scaling2}.  Let $y\in E$ and $0<T<+\infty$.  Under Assumptions  \ref{hyp:model1}, \ref{hyp:model2} and \ref{hyp:regime}, \ref{hyp:oscillation}, \ref{hyp:regime2}, \ref{hyp:kernel}, for $j\neq k$,
$$ \delta_{k,n,y}	\sup_{t\in [0,T]}\zp[b]{\hat s_{j,k,n,y}(t)-t} \xrightarrow[n\to +\infty]{\p}0.$$
\end{lem}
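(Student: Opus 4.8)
The plan is to prove the uniform rate by an explicit decomposition of the error that isolates the two sources of randomness in $\hat s_{j,k,n,y}$, namely the multiplicative constant $\hat\sigma_{n,y}$ and the random exponent $\gamma_y/\hat\gamma_{n,y}$. Writing $\hat\rho_n := \gamma_y/\hat\gamma_{n,y}$ and recalling from \eqref{eq:hats_n} that $\hat s_{j,k,n,y}(t) = \hat\sigma_{n,y}\, t^{\hat\rho_n}$, I would split
\[
\hat s_{j,k,n,y}(t) - t = (\hat\sigma_{n,y}-1)\, t^{\hat\rho_n} + \left( t^{\hat\rho_n} - t\right),
\]
and bound $\delta_{k,n,y}\sup_{t\in[0,T]}\zp[b]{\hat s_{j,k,n,y}(t)-t}$ by the sum of the suprema of the two pieces, treated separately.

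For the first piece, since $\hat\rho_n \xrightarrow[n\to+\infty]{\p} 1$ (a consequence of \eqref{eq:gamma}), eventually $\hat\rho_n>0$ with probability tending to one, so $t\mapsto t^{\hat\rho_n}$ is nondecreasing on $[0,T]$ and $\sup_{t\in[0,T]} t^{\hat\rho_n} = T^{\hat\rho_n} \xrightarrow[n\to+\infty]{\p} T = O_{\p}(1)$. Combined with the convergence $\delta_{k,n,y}(\hat\sigma_{n,y}-1)\xrightarrow[n\to+\infty]{\p}0$ recorded just before the statement, the product $\delta_{k,n,y}\zp[b]{\hat\sigma_{n,y}-1}\, T^{\hat\rho_n}$ is of the form $o_{\p}(1)\cdot O_{\p}(1)$ and hence vanishes in probability.

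For the second piece, I would apply the mean value theorem to $\rho\mapsto t^\rho$: for each $t\in(0,T]$ there is $\rho^\ast = \rho^\ast(t)$ between $1$ and $\hat\rho_n$ with $t^{\hat\rho_n}-t = (\hat\rho_n-1)\, t^{\rho^\ast}\ln t$. On the event $\{\hat\rho_n\in[\tfrac12,\tfrac32]\}$, whose probability tends to one, $\rho^\ast$ lies in $[\tfrac12,\tfrac32]$ uniformly in $t$, so that
\[
\sup_{t\in[0,T]}\zp[b]{ t^{\hat\rho_n} - t } \;\le\; \zp[b]{\hat\rho_n-1}\;\sup_{t\in(0,T]}\;\sup_{\rho\in[1/2,3/2]} t^{\rho}\zp[b]{\ln t} \;=\; C_T\,\zp[b]{\hat\rho_n-1},
\]
with $C_T<+\infty$ finite because $t^{\rho}\zp[b]{\ln t}$ extends continuously to $0$ and is bounded on $(0,T]$. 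Finally, $\zp[b]{\hat\rho_n-1} = \gamma_y\zp[b]{\hat\gamma_{n,y}^{-1}-\gamma_y^{-1}} = O_{\p}(\tilde\delta_{0,n,y}^{-1})$ by \eqref{eq:Hill_conv3}, and the scaling constraint \eqref{eq:scaling2} forces $\delta_{k,n,y}\ll\tilde\delta_{0,n,y}$; hence $\delta_{k,n,y}\zp[b]{\hat\rho_n-1}\xrightarrow[n\to+\infty]{\p}0$. Adding the two contributions yields the claim.

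The main obstacle is the uniform treatment of the exponent term near $t=0$, where $\ln t$ blows up while the exponent $\hat\rho_n$ is itself random. The device that resolves this is to first restrict to the high-probability event on which $\hat\rho_n$ is trapped in a fixed compact subinterval of $(0,+\infty)$, which makes the dominating constant $C_T$ deterministic and finite; one then only needs the pointwise rate $\zp[b]{\hat\rho_n-1}=O_{\p}(\tilde\delta_{0,n,y}^{-1})$ together with $\delta_{k,n,y}\ll\tilde\delta_{0,n,y}$. This is precisely the refinement over Lemma \ref{lem:dini}, where monotonicity alone delivered the uniform convergence but not its speed.
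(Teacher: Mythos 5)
Your proof is correct, and it takes a genuinely different route from the paper's. You split $\hat s_{j,k,n,y}(t)-t$ into a constant-error piece $(\hat\sigma_{n,y}-1)\,t^{\hat\rho_n}$ and an exponent-error piece $t^{\hat\rho_n}-t$, then control the latter uniformly by a mean value theorem in the exponent after trapping $\hat\rho_n=\gamma_y/\hat\gamma_{n,y}$ in a fixed compact subinterval of $(0,+\infty)$, so that $\sup_{t\in(0,T]}\sup_{\rho\in[1/2,3/2]}t^{\rho}\zp[b]{\ln t}$ is a finite deterministic constant; the rates then come from $\delta_{k,n,y}(\hat\sigma_{n,y}-1)=o_{\p}(1)$ (recorded just before Lemma \ref{lem:dini}) and from $\zp[b]{\hat\rho_n-1}=O_{\p}(\tilde\delta_{0,n,y}^{-1})$ together with $\delta_{k,n,y}\ll\tilde\delta_{0,n,y}$ from \eqref{eq:scaling2}. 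The paper instead exploits the explicit form of $g_{j,k,n,y}(t)=\hat\sigma_{n,y}t^{\gamma_y/\hat\gamma_{n,y}}-t$: it locates the unique interior critical point $t_{n,y}=\bigl(\tfrac{\gamma_y}{\hat\gamma_{n,y}}\hat\sigma_{n,y}\bigr)^{\hat\gamma_{n,y}/(\hat\gamma_{n,y}-\gamma_y)}$, notes $g_{j,k,n,y}(0)=0$, and reduces the supremum over $[0,T]$ to the two evaluations $\zp[b]{g_{j,k,n,y}(t_{n,y})}$ and $\zp[b]{g_{j,k,n,y}(T)}$; the critical-point equation yields the exact identity $g_{j,k,n,y}(t_{n,y})=t_{n,y}\bigl(\hat\gamma_{n,y}/\gamma_y-1\bigr)$, and both evaluations are $o_{\p}(\delta_{k,n,y}^{-1})$ by \eqref{eq:gamma} and \eqref{eq:rate_c}. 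Your argument is more elementary and more robust: it only uses smoothness of $\rho\mapsto t^{\rho}$ and would survive perturbations of the functional form of $\hat s_{j,k,n,y}$, at the cost of the event-restriction device near $t=0$. The paper's argument buys an exact expression for the extremal error, making the rate transparent without any uniform bound on $t^{\rho}\ln t$, but it relies on the specific monotonicity structure (a unique interior extremum). Both proofs consume exactly the same probabilistic inputs, namely \eqref{eq:gamma}, \eqref{eq:rate_c} (equivalently \eqref{eq:Hill_conv3}) and the scaling constraint \eqref{eq:scaling2}.
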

\begin{proof}[Proof of Lemma \ref{lem:s_n-t}]
Recall  $\hat \sigma_{n,y}:=\f{\hat c_{j,n,y}}{\hat c_{k,n,y}}\left(\f{c_{k,y}}{c_{j,y}}\right)^{\gamma_y/\hat\gamma_{n,y}}$\hspace{-0.2cm}. 
Define $g_{j,k,n,y}(t):=\hat s_{j,k,n,y}(t)-t$ for $t\ge 0$ with $\hat s_{j,k,n,y}$ as in \eqref{eq:hats_n}. Consider $t_{n,y}:= \left( \f{\gamma_y}{\hat\gamma_{n,y}} \hat \sigma_{n,y}\right)^{ \f{\hat\gamma_{n,y}}{\hat\gamma_{n,y}-\gamma_y}} $ the solution of $$ g'_{j,k,n,y}(t) = \hat \sigma_{n,y}\f{\gamma_y}{\hat\gamma_{n,y}}t^{\gamma_y/\hat\gamma_{n,y}-1}-1=0,\quad t\ge 0.$$  This corresponds to the maximum value of $g_{j,k,n,y}$ when $\gamma_y <\hat \gamma_{n,y}$ or its minimum value when $\gamma_y>\hat \gamma_{n,y}$. This is a consequence of the 2nd order optimality condition and $$g''_{j,k,n,y}(t) =  \hat \sigma_{n,y}\f{\gamma_y}{\hat\gamma_{n,y}}\left( \gamma_y/\hat\gamma_{n,y}-1\right) t^{\gamma_y/\hat\gamma_{n,y}-2},\quad t\ge 0. $$

Using \eqref{eq:gamma} and \eqref{eq:rate_c}, we easily see that \begin{align*}
	t_{n,y}&=	\left( \f{\gamma_y}{\hat\gamma_{n,y}} \hat \sigma_{n,y}\right)^{ \f{\hat\gamma_{n,y}}{\hat\gamma_{n,y}-\gamma_y}}=\exp\left( \f{\hat\gamma_{n,y}}{\hat\gamma_{n,y}-\gamma_y} \ln \left( \f{\gamma_y}{\hat\gamma_{n,y}} \hat \sigma_{n,y} \right)  \right)
	= \exp\left( \f{\hat\gamma_{n,y}}{\hat\gamma_{n,y}-\gamma_y} o_{\p}\left(\delta_{k,n,y}^{-1}\right)  \right) = 1+o_{\p}\left(1\right)  .
\end{align*}

For fixed $\omega\in \Omega$, $0<T<+\infty$ and $n$ large enough, we may write
\begin{align*}
	\sup_{t\in [0,T]}\zp[b]{g_{j,k,n,y}(t)}(\omega) &\le \max \left( \zp[b]{g_{j,k,n,y}(t_{n,y})}(\omega) , \zp[b]{g_{j,k,n,y}(T)}(\omega)\right).
\end{align*}

By Taylor expansion and multivariate $\Delta$-method with \eqref{eq:gamma} and \eqref{eq:rate_c}, one readily shows that for $0<T<+\infty$ fixed, $g_{j,k,n,y}(T) =o_{\p}( 1/ \delta_{k,n,y}) $. Now, we compute \begin{align*}g_{j,k,n,y}(t_{n,y})&=   
	  t_{n,y} \left( \f{\hat\gamma_{n,y} }{\gamma_y  }- 1\right).\end{align*}
   

Then, using \eqref{eq:gamma} and \eqref{eq:rate_c} once more yields $g_{j,k,n,y}(t_{n,y} )=o_{\p}( 1/ \delta_{k,n,y}) $.
\end{proof}
  
\begin{lem}\label{lem:lam_conv}
	{Let $M\in (0,+\infty)$. Let $1\le j\ne k\le d$,  $y\in E$   and  $1\ll k_n\ll n$ be an intermediate sequence. Under Assumptions \eqref{hyp:segers}, \eqref{hyp:rate_theoretical}, \ref{hyp:tau}, \ref{hyp:continuity}, \ref{hyp:beta}, \ref{hyp:kernel}, we define 
 \begin{center}$1\ll \delta_{0,n,y}\ll \tilde\delta_{0,n,y} = \sqrt{n\overline{\al}_n\psi_y(h)}$ \quad   and \quad   $1\ll \delta_{-1,n,y}\ll \tilde{\delta}_{-1,n,y}=\left(\f{n}{k_n}\right)^{\,\mu_y}$ \end{center} as in 
 Equation \eqref{eq:scaling1}   with $\mu_y>0$ as in Condition  \eqref{hyp:rate_theoretical}.   }
 Then, it holds that	\begin{align*} \min\left( \delta_{0,n,y},\delta_{-1,n,y}\right)  \sup_{\bm x \in [0,M]^2}\zp[b]{\lam_{j,k,y}(\bm x)-\hat \lam_{j,k,n,y}(\bm x) }& \xrightarrow[n\to +\infty]{\p}0.
	\end{align*}
\end{lem}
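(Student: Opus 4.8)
The plan is to peel off the deterministic part and reduce everything to the stable tail dependence function. Since $\lam_{j,k,y}(\bm x)=\zp[n]{\bm x}_1-L_{j,k,y}(\bm x)$ and $\hat\lam_{j,k,n,y}(\bm x)=\zp[n]{\bm x}_1-\hat L_{j,k,n,y}(\bm x)$, the linear terms cancel and the claim is equivalent to
\begin{align*}
\min\left(\delta_{0,n,y},\delta_{-1,n,y}\right)\sup_{\bm x\in[0,M]^2}\zp[b]{\hat L_{j,k,n,y}(\bm x)-L_{j,k,y}(\bm x)}\xrightarrow[n\to+\infty]{\p}0.
\end{align*}
Writing $t_n:=k_n/n\to 0$, I would split $\hat L_{j,k,n,y}-L_{j,k,y}=S_n+B_n$ into a stochastic and a deterministic bias part,
\begin{align*}
S_n(\bm x):=t_n^{-1}\left(C_{j,k,y}(\bm 1-t_n\bm x)-\hat C_{j,k,n,y}(\bm 1-t_n\bm x)\right),\quad B_n(\bm x):=t_n^{-1}\left(1-C_{j,k,y}(\bm 1-t_n\bm x)\right)-L_{j,k,y}(\bm x),
\end{align*}
so that, by the triangle inequality together with $\min(a,b)\le a$ and $\min(a,b)\le b$, it suffices to prove $\delta_{0,n,y}\sup_{\bm x}\zp[b]{S_n(\bm x)}\to 0$ and $\delta_{-1,n,y}\sup_{\bm x}\zp[b]{B_n(\bm x)}\to 0$ in probability.

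The bias term is immediate from the first-order copula condition. Applying \eqref{hyp:rate_theoretical} with $t=t_n$ and $T=M$ yields $\sup_{\bm x\in[0,M]^2}\zp[b]{B_n(\bm x)}=O(t_n^{\,\mu_y})=O(\tilde\delta_{-1,n,y}^{-1})$, whence $\delta_{-1,n,y}\sup_{\bm x}\zp[b]{B_n(\bm x)}=O(\delta_{-1,n,y}/\tilde\delta_{-1,n,y})\to 0$ since $\delta_{-1,n,y}\ll\tilde\delta_{-1,n,y}$. This step uses no hypothesis on the marginals, only the uniform copula rate, consistent with the assumptions invoked by the lemma.

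The stochastic term is the crux. Its only input is the functional empirical-copula fluctuation \eqref{eq:irene}, which under \eqref{hyp:segers}, \ref{hyp:tau}, \ref{hyp:continuity}, \ref{hyp:beta}, \ref{hyp:kernel} controls $\hat C_{j,k,n,y}-C_{j,k,y}$ uniformly on $[0,1]^2$ at rate $(n\psi_y(h_n))^{-1/2}$. The difficulty is that $S_n$ carries the amplifying factor $t_n^{-1}=n/k_n\to+\infty$: the crude global bound only gives $\sup_{\bm x}\zp[b]{S_n(\bm x)}=O_{\p}\left(\tfrac{n}{k_n}(n\psi_y(h_n))^{-1/2}\right)$, which is in general not of the required order $\tilde\delta_{0,n,y}^{-1}=(n\overline{\al}_n\psi_y(h))^{-1/2}$. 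The idea is to localize near the corner $\bm 1$: since the tail mass satisfies $1-C_{j,k,y}(\bm 1-t_n\bm x)=O(t_n)$, the relevant empirical-copula increments have variance of order $t_n/(n\psi_y(h_n))$, so after multiplication by $t_n^{-1}$ the fluctuations are of order $(t_n\,n\psi_y(h_n))^{-1/2}=(k_n\psi_y(h_n))^{-1/2}$, which matches $\tilde\delta_{0,n,y}^{-1}$ in the intermediate regime \ref{hyp:regime} (with $k_n\gtrsim n\overline{\al}_n$). Establishing this tail-localized control, rather than the global one amplified by $n/k_n$, is the main obstacle.

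Finally, uniformity over $[0,M]^2$ must be secured. Both \eqref{hyp:rate_theoretical} and \eqref{eq:irene} are already stated with a supremum, and for $n$ large the affine map $\bm x\mapsto\bm 1-t_n\bm x$ sends $[0,M]^2$ into $[0,1]^2$, so the bias bound and the global stochastic bound are uniform by construction; for the sharper tail-localized stochastic estimate, if obtained pointwise, its uniform version would follow from the coordinatewise monotonicity of $\bm x\mapsto\hat L_{j,k,n,y}(\bm x)$ via the probabilistic Dini argument already used in Lemma \ref{lem:dini}. Combining the two estimates through
\begin{align*}
\min\left(\delta_{0,n,y},\delta_{-1,n,y}\right)\sup_{\bm x\in[0,M]^2}\zp[b]{\hat L_{j,k,n,y}(\bm x)-L_{j,k,y}(\bm x)}\le\delta_{0,n,y}\sup_{\bm x}\zp[b]{S_n(\bm x)}+\delta_{-1,n,y}\sup_{\bm x}\zp[b]{B_n(\bm x)}
\end{align*}
then yields the claim.
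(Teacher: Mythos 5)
Your decomposition is exactly the paper's: the linear terms cancel, and the paper splits $\hat L_{j,k,n,y}-L_{j,k,y}$ through the oracle quantity $\tilde L_{j,k,n,y}(\bm x)=\frac{n}{k_n}\bigl(1-C_{j,k,y}(\bm 1-\frac{k_n}{n}\bm x)\bigr)$ into precisely your $S_n$ and $B_n$; your treatment of the bias $B_n$ via \eqref{hyp:rate_theoretical} also coincides with the paper's and is correct.

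The genuine gap is that you never prove the bound on the stochastic term $S_n$, and this is where the entire difficulty of the lemma sits. What you offer is a variance heuristic (increments of the empirical copula near $\bm 1$ should have variance of order $t_n/(n\psi_y(h_n))$) together with the admission that establishing this tail-localized control is ``the main obstacle''. Making that heuristic rigorous would require a localized (tail) empirical copula process result in the functional-covariate setting --- a conditional analogue of the theory in \cite{Einmahl_stdf,Schmidt2006} --- and no such result exists in the paper or is constructed in your proposal: \eqref{eq:irene} is a plain uniform bound on $[0,1]^2$ with no localization. Moreover, even granting the localized rate $(k_n\psi_y(h_n))^{-1/2}$, matching it to $\tilde\delta_{0,n,y}^{-1}$ requires $k_n\gtrsim n\overline{\al}_n$, an assumption absent from the lemma (only $1\ll k_n\ll n$ is imposed); and your fallback for uniformity --- the probabilistic Dini argument of Lemma \ref{lem:dini} --- upgrades pointwise consistency of monotone functions to uniform consistency, but it does not preserve a rate (the grid would have to refine with $n$, and the union bound would then need quantitative probability estimates you do not have). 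For comparison, the paper's own proof disposes of $S_n$ in one line: it applies \eqref{eq:irene} directly and, since $\overline{\al}_n=o(1)$ gives $\tilde\delta_{0,n,y}\ll\sqrt{n\psi_y(h_n)}$, asserts $\delta_{0,n,y}\sup_{\bm x}\bigl|S_n(\bm x)\bigr|\xrightarrow{\p}0$, i.e., it treats the $n/k_n$ amplification you worry about as harmless; taken literally that assertion needs something like $k_n\gtrsim n\sqrt{\overline{\al}_n}$. So your diagnosis isolates a real subtlety that the paper glosses over, but as it stands your proposal is not a proof of the lemma: the one step carrying all the difficulty is left open.
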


\begin{proof}[Proof of Lemma \ref{lem:lam_conv}]
	By using Equation  \eqref{eq:lam_def_cond}, we write  \begin{align*}\sup_{\bm x \in [0,M]^2}\zp[b]{\lam_{j,k,y}(\bm x)-\hat \lam_{j,k,n,y}(\bm x) }=\sup_{\bm x \in [0,M]^2}\zp[b]{L_{j,k,y}(\bm x)-\hat L_{j,k,n,y}(\bm x) }&.
	\end{align*}
{For  $n$ large enough, one can get $\frac{k_n}{n}M \le 1$.  	Then, the }triangular inequality allows us to bound the latter quantity by
	\begin{align*}& \sup_{\bm x \in [0,M]^2}\zp[b]{L_{j,k,y}(\bm x)-\tilde L_{j,k,n,y}(\bm x)}+\sup_{\bm x \in [0,M]^2}\zp[b]{\hat L_{j,k,n,y}(\bm x)-\tilde L_{j,k,n,y}(\bm x)},
	\end{align*} where $\tilde L_{j,k,n,y}(\bm x):=\f{n}{k_n}\left( 1- C_{j,k,y} (\bm 1- \frac{k_n}{n}\bm x) \right)$. Concerning the second term, since $\overline{\al}_n = o(1)$, it readily follows from \eqref{eq:irene} that for any deterministic $1\ll  \delta_{0,n,y}\ll \tilde\delta_{0,n,y}\ll \sqrt{n\psi_y(h_n)} $,
	\begin{align*} \delta_{0,n,y} \sup_{\bm x \in [0,M]^2} \zp[b]{\hat L_{j,k,n,y}(\bm x)  -\tilde L_{j,k,n,y}(\bm x) } &\xrightarrow[n\to +\infty]{\p}0.
	\end{align*}
	Lastly, using the rate condition  in \eqref{hyp:rate_theoretical}, we have  $\delta_{-1,n,y} \sup_{\bm x \in [0,M]^2}\zp[b]{ L_{j,k,y}(\bm x)-\tilde L_{j,k,n,y}(\bm x)}\xrightarrow[n\to +\infty]{\p}0. $ 
\end{proof}

 \section*{Conclusion}
In this paper, we present a semi-parametric method for estimating functional multivariate $L^1$-expectiles at extreme risk levels (i.e., functional $L^1$-MEEs). Going beyond \cite{BeckMailhotElena2021}, we establish, in this functional setting, the consistency   with rate of the approximated loss function by using empirical kernel-based estimators for the  tail index, the  tail ratio and the  upper tail dependence function. Then, we couple the loss function estimation with a gradient descent algorithm (e.g., BFGS-family). As a result, we give the consistency with rate of the approximated optimum problem for solving functional $L^1$-MEEs.  
\\
An immediate line of work is the implementation of the MEEs estimation method and testing its finite-sample performance. Adopting the BFGS family method as gradient descent algorithm, one may proceed to the simulation study for Lomax marginals and survival Clayton as dependence structure. Such model is closely related to \cite{GSU2022,GG2012,BeckMailhotElena2021} but taking into account that Pareto I is not second order regularly varying.
\\
Furthermore, one may raise the question of how to calibrate the parameters (especially from the kernel smoothing). A cross-validation procedure such as \cite{GSU2022} is in consideration for the choice of the bandwidth  associated to the small ball probability hypothesis of the considered functional space and the risk level of the MEEs.
\\
Another possible line of research would be to consider the $L^p$-norm in Equation \eqref{def:cond_multivariate_L1-expectile1}, i.e., to consider the  $L^p$-expectiles defined in \cite{maumedeschamps:extension} in this functional setting. The question of their estimation may also be approached by the same semi-parametric methodology although it would display more intricate algebra. Along the same lines, one may consider a possible extension of \cite{MaumeDeschampsRulliereSaidExtremes} to functional extreme $\Sigma$-expectiles for $\Sigma \ne \bm 1$.
\\
While the specific case of one dominant marginal is studied in \cite{MaumeDeschampsRulliereSaidExtremes}, an interesting problem is to remove the  multivariate regular variation hypothesis on the random vector $\bm X$ with  equivalent marginals tails. 
\\
Finally, a more ambitious future work would be to connect our inference method with the online optimization theory. Indeed, we propose in our work to simultaneously estimate and optimize the loss function associated to the considered  functional $L^1$-MEEs, which may essentially be seen as updating with new data-points  a multidimensional optimization problem.

\subsection*{Acknowledgement}
This work has been supported by the project ANR McLaren (ANR-20-CE23-0011). This work has been partially supported by the French government, through the 3IA Côte d’Azur Investments in the Future project managed by the  National Research Agency (ANR) with the reference number ANR-19-P3IA-0002.

\bibliographystyle{plain}
\bibliography{biblio_expectiles}

\begin{appendix} 

\section{Regular variation framework}\label{sec:RVF} 
\begin{Def}[Multivariate Regular Variation]\label{def:MRV}
		A random vector $\bm X\in \R^d$ is regularly varying with index $\alpha\ge 0$ if  there exists a random vector $\Theta$ on the unit sphere $\mathbb{S}^{d-1}$ such that for any $x\in (0,+\infty)$, the following vague convergence holds:    \begin{align*}\f{
				\p\left( \zp[n]{\bm X} > x\,z, \f{\bm X}{\zp[n]{\bm X}}\in \cdot \right)}{\p\left(  \zp[n]{\bm X} > z\right) }	&\xrightarrow[z\to +\infty]{} x^{-\al} \p\left(\Theta \in \cdot \right).
		\end{align*}
	\end{Def}

\begin{Def}[Second order regular variation \cite{extreme_value_theory_de_haan_ferreira}]\label{def:soc1}
	Let $\gamma >0$ and $\rho \le 0$. We say that $U \in 2{\rm{RV}}_{-\gamma,\rho}(+\infty)$ if there exists an auxiliary function $A$ with ultimately constant sign and $\lim\limits_{t\to +\infty}A(t)=0$ such that \begin{align}\label{eq:soc1}
		\lim\limits_{t\to +\infty}	\ff{A(t)}\left( \f{U(tx)}{U(t)}- x^{\gamma} \right)&=x^{\gamma} H_\rho(x):=x^{\gamma}\int_1^x u^{\,\rho-1}\mathrm{d}u,\quad  x>0.
	\end{align}
\end{Def}
Note that for $x>0$,  $$H_\rho(x)=\ln(x)\bm{1}_{\{\rho=0\}} +\f{x^{\,\rho}-1}{\rho}\bm{1}_{\{\rho<0\}} .$$

By Theorem 2.3.9 in \cite{extreme_value_theory_de_haan_ferreira}, this is equivalent to
\begin{align}\label{eq:soc12}	\lim\limits_{t\to +\infty}	\ff{A\left( 1/{\overline{F}(t)} \right) }\left( \f{\overline{F}(tx)}{\overline{F}(t)}- x^{-\ff{\gamma} } \right)&= H_{\rho,\gamma}(x):= x^{-\ff{\gamma}}  \f{x^{\f{\rho}{\gamma}}-1}{\gamma\rho },\quad x>0.
\end{align}

Namely,$$U\in 2{\rm{RV}}_{\gamma,\rho}(+\infty)\iff  \overline{F} \in 2{\rm{RV}}_{-\ff{\gamma},\f{\rho}{\gamma}}(+\infty).$$

Besides, on the auxiliary functions level, Theorem 2.3.3 in \cite{extreme_value_theory_de_haan_ferreira} provides $$A\in {\rm{RV}}_{\rho}(+\infty),\quad A \circ  \ff{\overline{F}} \in {\rm{RV}}_{\frac{\rho}{\gamma}}(+\infty).$$

\begin{rmk} 
Condition \ref{hyp:model1} entails a second order regular variation behaviour. Note that $c_{1,y}\equiv 1$ for any $y\in E$. For any $1\le j \le d$, since $\overline{F}_{j,y} \in {\rm{RV}}_{-1/\gamma_y}(+\infty)$, there exists $\ell_{\overline{F}_{j,y}}\in {\rm{RV}}_0(+\infty)$, such that \begin{align}\label{eq:L_representation}\overline{F}_{j,y}(x)&=x^{-1/{\gamma_y}}\ell_{\overline{F}_{j,y}}(x),\quad x>0.
\end{align}

Also, we have $U_{j,y}\in {\rm{RV}}_{\gamma_y}(+\infty)$ and thus, there also exists a slowly varying function $\ell_{U_{j,y}}:=\ell_{j,y}$ such that ${U_{j,y}}(x)=x^{\gamma_y} \ell_{j,y}(x)$, for $ x> 0$.
 Condition \ref{hyp:model2} is inherited from \cite{BeckMailhotElena2021,MaumeDeschampsRulliereSaidExtremes} and states that each marginal behaves the same way in the extreme regime.
Moreover, since $ \overline{F}_{j,y} \in 2{\rm{RV}}_{-1/{\gamma_y},{\rho_{j,y}}/{\gamma_y}}(+\infty)$, it follows that $U_{j,y}\in 2{\rm{RV}}_{\gamma_y,\rho_{j,y}}(+\infty)$. According to Definition \ref{def:soc1}, there exists auxiliary functions $A_{j,y}:=A_{U_{j,y}}\in {\rm{RV}}_{\rho_{j,y}}(+\infty)$ satisfying Equation \eqref{eq:soc1}.
\end{rmk}

\begin{rmk}\label{exampleFbar} Examples of distributions satisfying the second order regular variation condition are:
	\begin{itemize}\item Log-gamma distribution defined as the exponential of the sum of two independent standard exponential random variables: $\overline{F}(x)=\ff{x}\left( 1+\log(x)\right) \in 2{\rm{RV}}_{-1,0}(+\infty)$ with $A( 1/{\overline{F}(t)} )=1/\log(t)$ (see \cite{soc}).
        \item  For  $x>e$, \, $\overline{F}(x)=\f{\log x}{x}e \,\in \, {\rm 2RV}_{-1,0}(+\infty),\,$ with $A( 1/{\overline{F}(t)} )=1/\log(t)$.
		\item Hall-Weiss class: $\overline{F}(x)=\ff{2} x^{-\al} (1+x^\rho)\in 2{\rm{RV}}_{-\al,\rho}(+\infty)$ for $\alpha>0$ and $\rho <0$ with $A( 1/{\overline{F}(t)} )=\rho t^\rho$ (see \cite{soc}).
		\item Cauchy distribution: $\overline{F}(x) = \ff{\pi}\tan^{-1}(1/x)+\ff{2}(1-{\rm{sign}}(x)) \in 2{\rm{RV}}_{-1,-2}(+\infty) $ (see \cite{GG2012}).
		\item Fréchet distribution: $\overline{F}(x)=1-e^{-x^{-\ff{\gamma}}} \in 2{\rm{RV}}_{-\gamma,-\gamma}(+\infty)$, $\gamma>0$ (see \cite{GG2012}).
		\item Burr distribution: $\overline{F}(x)=\left(1+x^{\tau}\right)^{-\lam} \in 2{\rm{RV}}_{-\ff{\tau\lam},-\tau}(+\infty)$, $\tau,\lam>0$ (see \cite{GG2012}).
		\item Student $t_\nu$, $\nu>1$, distribution with density $f(x)=\f{\Gamma\left( \f{\nu+1}{2}\right) }{\sqrt{\nu \pi}\Gamma\left( \f{\nu}{2}\right)}\left(1+\f{x^2}{\nu} \right)^{-\f{\nu+1}{2}}$. Then, $\overline{F}\in2{\rm{RV}}_{-\nu,-2}(+\infty)$.
  	\end{itemize} 
The interested reader can find more 2RV type distributions in Section 5 of \cite{Mao}.  
\end{rmk}
\end{appendix}
\end{document}